\def\hpq0{h^{p,q}_{\leq 0}}
\def\Hpq0{\H_{\leq 0}^{p,q}}
\def\dbar{\bar\partial}
\def\R{{\mathbb R}}
\def\C{{\mathbb C}}
\def\F{{\mathcal F}}
\def\H{{\mathcal H}}
\def\dhash{{d^{\#}}}
\def\n_jhash{{n_j^{\#}}}
\def\be{\begin{equation}}
\def\ee{\end{equation}}
\newtheorem{thm}{Theorem}[section]
\newtheorem{lma}[thm]{Lemma}
\newtheorem{cor}[thm]{Corollary}
\newtheorem{prop}[thm]{Proposition}
\theoremstyle{definition}
\theoremstyle{remark}
\newtheorem{preremark}{Remark}
\newtheorem{preex}{Example}
\numberwithin{equation}{section}
\title[]
{Superforms, supercurrents, minimal manifolds and Riemannian geometry.}
\address{Department of Mathematics\\Chalmers University
  of Technology \\
 S-412 96
  G\"OTEBORG\\SWEDEN} 
\email{ bob@chalmers.se}
\author[]{Bo Berndtsson}
\begin{document}

\begin{abstract}
Supercurrents, as introduced by Lagerberg, were mainly motivated as a way to study tropical varieties. Here we will associate a supercurrent to any smooth submanifold of $\R^n$. Positive supercurrents resemble positive currents in complex analysis, but  depend on a choice of scalar product on $\R^n$ and reflect the induced Riemannian structure on the submanifold. In this way we can use techniques from complex analysis to study real submanifolds. We illustrate the idea by giving area estimates of minimal manifolds and a monotonicity property of the mean curvature flow. We also illustrate the idea by a relatively short proof of Weyl's tube formula.
\end{abstract}

\maketitle

\section{Introduction}

A {\it superform} on $\R^n$ is defined as a differential form on $\C^n$ whose coefficients do not depend on the imaginary part of the variable. The dual of the space of superforms (with coefficients compactly supported in $\R^n$ and with the usual topology from the theory of distributions), is the space of {\it supercurrents}. Superforms and supercurrents were introduced by Lagerberg ,\cite{Lagerberg}, as a way to study tropical varities. A tropical variety in $\R^n$ defines a $d$-closed, positive, supercurrent of integration, and conversely any such supercurrent defines a tropical variety, given a condition on the dimension of the support. (See  also the work of Babaee, \cite{Babaee}, for related work using standard currents on $(\C^*)^n$ instead of supercurrents.)

Here we extend the `superformalism' in a different direction by associating to any smooth (or piecewise smooth) submanifold, $M$,  of $\R^n$ a supercurrent, $[M]_s$,   with the aim to apply methods from complex analysis to real manifolds.  These supercurrents are $d$-closed only if the manifold is a linear subspace, but $d[M]_s$ is given by an explicit formula involving the second fundamental form of $M$. As a result, it turns out that $M$ is minimal if and only if 
$$
[M]_s\wedge\beta^{m-1}/(m-1)!
$$
is closed, where $m$ is the dimension of $M$ and $\beta$ is the Euclidean K\"ahler form on $\C^n$ (Corollary 5.2).    Thus, minimality is characterized by a rather simple linear equation, which suggests a generalization of minimal manifolds to minimal `supercurrents'. This is of course similar to the use of (classical) currents and varifolds in the theory of minimal manifolds, but has the extra feature of a bidegree, as in complex analysis. With this we can imitate  Lelong's method for positive closed currents to prove e. g. the monotonicity formula for minimal manifolds, and a volume estimate that generalizes a recent result of Brendle and Hung, \cite{Brendle-Hung}(Theorem 6.3). We also obtain a result on removable singularities for minimal manifolds along the lines of the the El Mir-Skoda theorem from complex analysis (Theorem 7.2), and a formula for the variation of the volume under the mean curvature flow (Theorem 8.1). ( I take the opportunity to thank Duong Phong for suggesting to apply the formalism to the mean curvature flow.)

After that we give an expression of the Riemann curvature tensor of $M$ as a superform. This is basically a rewrite of Gau\ss's formula. We apply it  in the last section,  to give a rather short proof of Weyl's tube theorem, \cite{Weyl}. The proof is in essence the same as Weyl's proof, but we have included it, hoping to show that the superformalism is useful in computations.

\section{Preliminaries.}

We start by recalling the  definitions and basic properties of superforms and supercurrents, mainly following Lagerberg, \cite{Lagerberg} (and \cite{Berndtsson}), but with some modifications. Let $E$ be an $n$-dimensional vector space over $\R$. Thus $E$ can be identified with $\R^n$, but at some points it will be convenient not to fix a basis. We define the 'superspace' of $E$ to be
$$
E_s=E\oplus E= E_0\oplus E_1,
$$
where we use the subscripts to indicate the first or second summand. A superform on $E$ is a differential form on $E_s$ that is invariant under translation in the $E_1$-variable. If $x=(x_1, ...x_n)$ and $\xi=(\xi_1, ...\xi_n)$ are coordinates on $E_0$ and $E_1$ respectively a superform can then be written
\be
a=\sum a_{I, J}(x) dx_I\wedge d\xi_J,
\ee
where the coefficients do not depend on $\xi$. We say that a superform $a$ has bidegree $(p,q)$ if the lengths of the multiindices in (2.1) satisfy
$|I|=p$ and $|J|=q$. With these conventions, a superform of bidegree $(0,0)$ can be identified with a function  on $E$, since it does not depend on $\xi$. So, a `superfunction' on $E_s$ is a function on $E$.

We also equip $T^*(E_s)=T^*(E_0)\oplus T^*(E_1)$ with a complex structure $J$, such that $J$ maps $T^*(E_0)$ to $T^*(E_1)$ and vice versa. (Here our definitions differ from Lagerberg's, who considers instead maps from $T^*(E_0)$ to $T^*(E_1)$ satisfying $J^2=1$.) In the sequel we will only consider bases of $E_0$ and $E_1$ such that $J(dx_i)=d\xi_i$, and therefore $J(d\xi_i)=-dx_i$. We then extend $J$ to act on forms of arbitrary bidegree so that
$J(a\wedge b)=J(a)
\wedge J(b)$. When  $a$ is of bidegree $(p,0)$ we sometimes write $J(a)=a^{\#}$. 

The ordinary  exterior derivative of $a$ is
$$
da=\sum da_{I,J}\wedge dx_I\wedge d\xi_J= \sum \frac{\partial a_{I,J}}{\partial x_k} dx_k\wedge dx_I\wedge d\xi_J,
$$
and we also define 
$$
\dhash a:=\sum \dhash a_{I,J}\wedge dx_I\wedge d\xi_J=\sum \frac{\partial a_{I,J}}{\partial x_k} d\xi_k\wedge dx_I\wedge d\xi_J.
$$
$E_s$ is thus just the complexification of $E$ and $\dhash$ is just $d^c= i(\dbar-\partial)$, but we write $\dhash$ to emphasize that it acts only on superforms, i. e. forms not depending on $\xi$. Note that we also have
$$
\dhash a= (-1)^kJdJ(a),
$$
where $k=p+q$ is the total degree of $a$.

We   also suppose given a scalar product on $E$ and extend it to $E_s$ so that it is invariant under $J$ and $E_0\perp E_1$. Thus, if $dx_i$ are orthonormal, $d\xi_i=dx_i^{\#}$ are orthonormal on $E_1$ and $dx_i, d\xi_i$ are orthonormal on $E_s$.

The main point in the construction of $E_s$ is the definition of integrals. If $a$ is a form of maximal bidegree $(n,n)$, we write $a=a_0 dx\wedge d\xi$ with $dx=dx_1\wedge ...dx_n, d\xi=dx^{\#}$ and put
$$
\int_{E_s} a=\int_{E_0} a_0 dx\int_{E_1} d\xi.
$$
The integral over $E_0$ here is  well defined as soon as we have chosen an orientation of $E_0$, if we assume that $a_0$ has enough decrease at infinity to make the integral convergent. For the integral with respect to $\xi$ we  {\it define}
$$
c_n\int d\xi =1
$$
if $dx_i$ are orthonormal and oriented, where $c_n=(-1)^{n(n-1)/2}$. Except for the constant $c_n$, this is the  Berezin integral, \cite{Berezin}.
 The reason for introducing the factor $c_n$  is that we want the integral of
\be
a= a_0 dx_1\wedge d\xi_1 \wedge ...dx_n\wedge d\xi_n
\ee
to be positive if $a_0$ is positive in accordance with the complex case. If $a$ is given by (2.2) and $dx_i$ are orthonormal, we get
\be
\int_{\R^n_s} a=c_n\int_{\R^n_s} a_0 dx\wedge d\xi= \int a_0 dx\, c_n\int d\xi=\int a_0 d\lambda(x),
\ee
where $d\lambda$ is Lebesgue measure. Note  that if we change orientation, the 'superintegral' remains the same. This follows  since the integrals with respect to $x$ and $\xi$ both change sign, or directly from (2.3).

Let us briefly compare superinteration to classical integration over the complexification. The first problem with classical integration is of course that the classical integral over $E_s$ would always be divergent since $a_0$ does not depend on $\xi$. This could be overcome by replacing $E_1$ by its quotient by a lattice, so that we would replace $E_s$ by $E_0\times T^n$, where $T^n$ is a torus. The reason that does not work here is that we will later want to integrate forms of lower bidegree over linear subspaces, and these subspaces do not in general correspond to subtori, unless the subspace satisfies a rationality condition that we cannot assume to be satisfied.

We next list a few rules of computation for superintegrals. First
$$
\int_{\R^n_s} a=\int_{\R^n_s} J(a)
$$
as follows directly from the definition.  Furthermore, if $a$ is  compactly supported of bidegree $(n-1,n)$, 
$$
\int_{\R^n_s} da=0.
$$
This means that we have the usual formula for integration by parts
$$
\int_{\R^n_s} da\wedge b=(-1)^{k+1}\int_{\R^n_s} a\wedge db
$$
if at least one of the forms $a$ and $b$ is compactly supported. Similarily, using $\dhash=\pm JdJ$ and that the superintegral is $J$-invariant, we also have
$$
\int_{\R^n_s} \dhash a=0.
$$
This means that the same integration by parts formula as for $d$ also holds for $\dhash$. From this one verifies, for example, that if $\rho$ is a function and $S$ is of bidegree $(n-1,n-1)$, then
\be
\int_{\R^n_s} \rho d\dhash S=\int_{\R^n_s}d\dhash\rho\wedge S.
\ee

We can also define superintegrals over submanifolds of $\R^n$ if the form we integrate is of bidegree $(p,n)$, i. e. is of maximal degree in $d\xi$. For instance, if $D$ is a smoothly bounded domain in $\R^n$ and $a=\alpha\wedge d\xi$ is of bidegree $(n-1,n)$, then
$$
\int_{\partial D} a:= \int_{\partial D}\alpha \int_{E_1} d\xi= c_n\int_{\partial D}\alpha.
$$
 It thus follows directly that Stokes' formula holds:
$$
\int_{\partial D} a=\int_{\R^n_s} \chi_D da.
$$
'Superintegration' over a submanifold of forms not of full degree in $d\xi$ is a different matter that will be discussed later in section 4, but as a preparation for that we next discuss the case of linear subspaces. 
Let  $F$ be a linear subspace of $E=E_0$ of dimension $m$. Then its complexification is 
$F_s=F\oplus J(F)$, so $F$ defines a superspace which is a complex linear subspace of $E_s$. Restricting a superform of bidegree $(m,m)$ on $E_s$ to $F_s$, we thus have a definition of the integral of $a$ over $F_s$,
$$
\int_{F_s} a
$$
as before. 

Having defined superforms and their integrals, we now turn  to supercurrents.  The space of supercurrents of bidegree $(p,q)$, or {\it bidimension $(n-p, n-q)$},  is the dual of the space of smooth, compactly supported (in $x$ !), superforms of bidegree $(n-p,n-q)$. Here we use the classical  notion of  duals from the theory of distributions, and we say that a supercurrent is of  order zero if it is continuous for the uniform  topology on superforms. Note that a supercurrent of bidegree$(n,n)$  is a (classical) current of top degree on $E$, since  superforms  of bidegree $(0,0)$ are functions on $E$. In particular, a supercurrent of top degree and order zero is  a measure  on $E$ (not on $E_s$). As usual, given coordinates,  a supercurrent can be written 
\be
T=\sum T_{I, J} dx_I\wedge d\xi_J,
\ee
where  $T_{I,J}$ are distributions on $E_0$. Let us explain this notation a bit more. We think of a distribution as having bidegree $(0,0)$ (a generalized {\it function}), i. e. as acting on forms of top degree. Then (2.5) means that if $a$ is a test form of complementary bidegree
$$
T.\alpha=\sum T_{I,J}.( a_{K. L} dx_I\wedge d\xi_J\wedge dx_{K}\wedge d\xi_{L}).
$$
In particular, if $T_{I, J}$ are locally integrable functions, then
$$
T.\alpha=\int_{\R^n_s} T\wedge \alpha.
$$

In practice, for us, the coefficients will at worst be measures. If $\mu$ is a measure, it acts on functions, and so should be regarded as a current of bidegree $(n,n)$. We define  the corresponding object of bidegree $(0,0)$, $*\mu$, by
$$
*\mu. \alpha_0 dx_1\wedge d\xi_1 \wedge ...dx_n\wedge d\xi_n= \mu.\alpha_0.
$$
As an example of this we consider superintegration over a linear subspace $F$. Choose orthonormal coordinates so that
$$
F=\{ x_{m+1}=...x_n=0\}
$$
and write an $(m,m)$-form as
$$
a=a_0 dx_1\wedge d\xi_1...dx_m\wedge d\xi_m +...
$$
Then the restriction of $a$ to $F_s$ is just $a_0 dx_1\wedge d\xi_1...dx_m\wedge d\xi_m$, and
$$
\int_{F_s} a=\int_F a_0 *d\lambda_F
$$
as before.  Unwinding definitions, this means that 
the supercurrent defined by superintegration over $F$ is 
$$
[F]_s:= (*d\lambda_F) dx_{m+1}\wedge d\xi_{m+1} ...dx_n\wedge d\xi_m=
c_{n-m}(*d\lambda) dx_{m+1}...\wedge dx_n\wedge d\xi_{m+1}...\wedge d\xi_n.
$$
Since the standard current of integration on $F$ as a subspace of $E$ is
$$
[F]=(*d\lambda_F) dx_{m+1}...\wedge dx_n,
$$
we see that
$$
[F]_s=c_{n-m}[F]\wedge d\xi_{m+1}...\wedge d\xi_n.
$$
Note that $d\xi_i=dx_i^{\#}$ where $dx_i$ for $i=m+1, ...n$ is a frame for the conormal bundle of $F$.
In the next section we shall use this  to define superintegration over general smooth submanifolds of $E=\R^n$.

An important point to notice is that whereas the standard  current of integration is well defined without any extra structure, the supercurrent $[F]_s$ depends on the choice of scalar product. 

Just as in the complex case we now say that a supercurrent $T$  of bidimension $(m,m)$ is (weakly) positive if
$$
T. \alpha_1\wedge \alpha_1^{\#}\wedge ...\alpha_m\wedge \alpha_m^{\#}\geq 0
$$
for any choice of compactly supported $(1,0)$-forms $\alpha_j$. It is then easily verified that the supercurrent of a linear subspace is positive, and it is also immediately clear that it is $d$-closed.  

Similarily, we say that a superform $\alpha$ of  bidegree $(n-m,n-m)$ is (weakly) positive if 
\be
\alpha\wedge \alpha_1\wedge \alpha_1^{\#}\wedge ...\alpha_m\wedge \alpha_m^{\#}\geq 0
\ee
at every point. This is clearly equivalent to saying that the expression in (2.6) is positive at any point when $\alpha_j$ are constant. When 
$$
\alpha=\sum \alpha_{j k} dx_j\wedge d\xi_k
$$
is of bidegree $(1,1)$, this means that the matrix of coefficients $(\alpha_{j k})$ is positive semidefinite. At any point, such a form can be written
$$
\alpha=\sum\alpha_j\wedge\alpha_j^{\#}
$$
so we get
\begin{prop} The wedge product between a positive (symmetric) $(1,1)$-form and a positive (symmetric) form of bidegree $(p,p)$ is again positive.
\end{prop}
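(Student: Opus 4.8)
The plan is to reduce the assertion to a pointwise computation and then feed it into the definition of positivity for the $(p,p)$-factor, using the diagonalisation of a positive $(1,1)$-form recorded just before the statement. Since positivity of a superform is by definition a condition holding at each point, I would fix an arbitrary point and argue there, with all coefficients treated as constants. Let $\alpha$ denote the positive symmetric $(1,1)$-form and $\gamma$ the positive symmetric $(p,p)$-form, so that $\alpha\wedge\gamma$ has bidegree $(p+1,p+1)$. The symmetry of $\alpha$ means its coefficient matrix is symmetric, and positivity means it is positive semidefinite; diagonalising it with an orthogonal matrix and nonnegative eigenvalues gives exactly the representation $\alpha=\sum_j\alpha_j\wedge\alpha_j^{\#}$ with $(1,0)$-forms $\alpha_j$, which is the decomposition quoted immediately above the proposition.

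Next I would test $\alpha\wedge\gamma$ against the definition of positivity. A form of bidegree $(p+1,p+1)$ is positive precisely when, for arbitrary $(1,0)$-forms $\beta_1,\dots,\beta_{n-p-1}$, the top-degree superform
\[
\alpha\wedge\gamma\wedge\beta_1\wedge\beta_1^{\#}\wedge\cdots\wedge\beta_{n-p-1}\wedge\beta_{n-p-1}^{\#}
=\sum_j \gamma\wedge(\alpha_j\wedge\alpha_j^{\#})\wedge\beta_1\wedge\beta_1^{\#}\wedge\cdots\wedge\beta_{n-p-1}\wedge\beta_{n-p-1}^{\#}
\]
is nonnegative, where on the right I have substituted the decomposition of $\alpha$. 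Each summand is now $\gamma$ wedged with a collection of factors of the form (a $(1,0)$-form) $\wedge$ (its $\#$). Counting these factors, there is the one pair $\alpha_j\wedge\alpha_j^{\#}$ together with the $n-p-1$ pairs $\beta_i\wedge\beta_i^{\#}$, for a total of $1+(n-p-1)=n-p$ pairs. Since $\gamma$ has bidegree $(n-m,n-m)$ with $m=n-p$, this is exactly the expression appearing in the definition of positivity of $\gamma$, so every summand is $\geq 0$; summing over $j$ gives $\geq 0$, as required. (The product is moreover symmetric automatically, because $J$ is multiplicative and $J\alpha=\alpha$, $J\gamma=\gamma$ give $J(\alpha\wedge\gamma)=\alpha\wedge\gamma$.)

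The one point that genuinely needs care, and which I regard as the main obstacle, is the sign bookkeeping in the rearrangement above: passing from $\alpha\wedge\gamma\wedge\cdots$ to $\sum_j\gamma\wedge(\alpha_j\wedge\alpha_j^{\#})\wedge\cdots$ involves commuting $\gamma$, the pairs $\alpha_j\wedge\alpha_j^{\#}$, and the pairs $\beta_i\wedge\beta_i^{\#}$ past one another. The key observation that makes this harmless is that each of these factors has even total degree ($2p$ for $\gamma$, and $2$ for each pair) and therefore lies in the centre of the exterior algebra, so all these interchanges are sign-free and no cancellation can occur. Once this centrality is noted, the identity $1+(n-p-1)=n-p$ matches the number of pairs to the definition of positivity of $\gamma$, and the proof is complete; everything else is the routine diagonalisation already granted in the text.
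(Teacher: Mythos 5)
Your argument is correct and is exactly the paper's (largely implicit) proof: diagonalise the positive symmetric $(1,1)$-form as $\sum_j\alpha_j\wedge\alpha_j^{\#}$ and observe that each resulting summand, after wedging with the test pairs $\beta_i\wedge\beta_i^{\#}$, is an instance of the defining positivity condition for the $(p,p)$-factor. Your degree count $1+(n-p-1)=n-p$ and the remark that all factors are of even total degree (so the rearrangement is sign-free) correctly fill in the details the paper leaves to the reader.
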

By approximation, the same thing holds for the product of $(1,1)$-forms with currents of bidegree $(p,p)$.

We next introduce the analog of the K\"ahler form in $\R^n$. This is by definition
$$
\beta:= \sum dx_j\wedge d\xi_j= (1/2)d\dhash |x|^2.
$$
 Then, the volume form on $F$ can be written
 $$
 d\lambda_F=[F]_s\wedge\beta^m/m!
 $$
 in good analogy with the complex case.

In the coming sections we will frequently use contraction with a 1-form in the computations. This is well defined, since we have induced scalar products on the space of all forms, and we define contraction as the dual of exterior multiplication. If $a=\sum a_j dx_j$ is a $(1,0)$-form, then $a\rfloor\beta=a^{\#}=J(a)$, whereas $a^{\#}\rfloor\beta=-a=J(a^{\#})$. Therefore,
$$
a\rfloor\beta=J(a)
$$
for any 1-form $a$. 

Similarily, contraction with a vector field 
$$
\vec{V}=\sum V_j \frac{\partial}{\partial x_j}
$$
is well defined. Lowering indices, $\vec{V}$ corresponds to the $(1,0)$-form $V=\sum V_j dx_j$, and $\vec{V}$ and $V$  act in the same way by contraction. Hence, e. g. $\vec{V}\rfloor\beta =V^{\#}$.

Finally, we will discuss how superforms transform under diffeomorphisms: If $G$ is a (local) diffeomorphism on $E=\R^n$ and $\alpha= \sum \alpha_{I, J} dx_I\wedge d\xi_J$  we define the pull back of $\alpha$ under $G$ as
\be
G^*(\alpha)=\sum G^*(\alpha_{I,J} dx_I)\wedge d\xi_J,
\ee
i. e. the pull back is the standard pull back on the $x$-part of the form and the $d\xi_j$:s are invariant. This means that we extend $G$ to a (local) diffeomorphism on $E_s$ by leaving $E_1$ fixed, and then take the usual pullback.  Then
$$
\int_{\R^n_s} G^*(\alpha)= \int_{\R^n_s} \alpha
$$
if $\alpha$ is of bidegree $(n,n)$ and $G$ is orientation preserving.  

Let now $\vec{V}$ be a vector field on $\R^n$ and let $G_t$ be its flow, or one parameter family of diffeomorphisms. The classical formula of Cartan for the Lie derivative, \cite{Warner}, says that if $\alpha$ is of bidegree $(p,0)$ then
\be
\frac{ d G_t^*(\alpha)}{dt}|_{t=0}= d (\vec{V}\rfloor\alpha) +\vec{V}\rfloor d\alpha=:L_{\vec{V}}\alpha,
\ee
where $\vec{V}\rfloor$ means contraction with the field $\vec{V}$. 
This formula holds also for forms of general bidegree. Indeed, this follows immediately from Cartan's formula on the superspace $E_s$, if we extend  $\vec{V}$  to a vector field on $E_s$ that has no component in the second factor.

\section{Relation to convex functions and tropical varieties}

It is clear from the last part of the previous section that if $\phi$ is a smooth function on $\R^n$, then $\phi$ is convex if and only if $d\dhash\phi$ is a positive superform. By approximation, it follows that  a general, possibly not smooth, function $\phi$ is convex if and only if $d\dhash\phi$ is a positive supercurrent.  Conversely, if
$$
\alpha=\sum \alpha_{j k}dx_j\wedge d\xi_k
$$
is a symmetric closed $(1,1)$ current, we can always write $\alpha=d\dhash \phi$ for some distribution $\phi$. This is because
$$ 
\frac{\partial\alpha_{j k}}{\partial x_l}=\frac{\partial\alpha_{l k}}{\partial x_j}
$$
so 
$$
\alpha_{j k}=\frac{\partial\phi_{ k}}{\partial x_j}
$$
for some $\phi_k$.
By symmetry
$$
\frac{\partial\phi_{ k}}{\partial x_j}=\frac{\partial\phi_{ j}}{\partial x_k}
$$
so $\phi_k=(\partial \phi/ \partial x_k)$ for some $\phi$.
If $\alpha$ is moreover positive, $\phi$ must be convex, so the positive symmetric $(1,1)$-currents are precisely the ones that can be written $d\dhash \phi$ for some convex $\phi$. 

If $\phi$ is a smooth convex function, we can define $(d\dhash\phi)^n/n!$ which equals
$$
c_n\det(\frac{\partial^2\phi}{\partial x_j\partial x_k})  dx\wedge d\xi.
$$
The corresponding measure on $\R^n$ is the Monge-Amp\'ere measure of $\phi$
$$
MA(\phi)= \det(\frac{\partial^2\phi}{\partial x_j\partial x_k})  dx.
$$
By the Bedford-Taylor theory ( \cite{Bedford-Taylor}) this definition makes sense for general convex functions. Indeed, Bedford and Taylor define $(dd^c\phi)^n/n!$ for general locally bounded plurisubharmonic functions, hence in particular for convex functions on $\R^n$, considered as functions on $\C^n$ that do not depend on the imaginary part of the variable.  This means that for a general convex function on $\R^n$,
$$
(d\dhash\phi)^n/n! =MA(\phi),
$$
where the Monge-Amp\'ere measure in the right hand side is taken in the sense of Alexandrov. This follows, since the two sides coincide for smooth functions and are continuous under uniform convergence.

(To be quite honest, Bedford and Taylor \cite{Bedford-Taylor} define $(dd^c\phi)^n$ as a measure on $\C^n$ and prove continuity under decreasing sequences in \cite{2Bedford-Taylor}, hence also for unform convergence when the functions are continuous. Since
$$
(d\dhash\phi)^n/n! =\pi_*(\chi(\xi)(dd^c\phi)^n/n!)),
$$
if $\chi$ is a function of $\xi$ with integral 1 and $\pi$ is the projection from $\C^n$ to $\R^n$, existence and continuity of $(d\dhash\phi)^n$ follows.)

Let us now consider convex functions of the form
$$
\phi(x)=\max_{i\in I} a^i\cdot x +b^i=\max_{i\in I} l_i(x),
$$
where $I$ is a finite set. We will call such functions, i e the maxima of a finite collection of affine functions, quasitropical polynomials, reserving the term tropical polynomials for such functions where all components of the (co)vectors $a^i$ and the numbers $b^i$ are integers, see \cite{Mikhalkin}. We may assume that all the  $a^i$ are different. Indeed, if $a^i=a^k$  and say $b^i> b^k$, then $l_i>l_k$ everywhere, and we get the same function  if we omit $l_k$. 

Let $E_j=\{\phi=l_j\}$. Then $E_j$ is defined by a finite set of linear inequalities. Assume that one of these sets, $E_k$ has empty interior. Then we can define
$$
\phi^k=\max_{i\neq k} l_i
$$
and get a new quasitropical polynomial which equals $\phi$ on a dense set. Hence, by continuity, $\phi_k=\phi$ everywhere, so we may as well omit $l_k$ in the definition of $\phi$, and can assume that all $E_j$ have non empty interior. We therefore get a decomposition of $\R^n$ as a finite union of non degenerate but possibly unbounded polyhedra. We then also have that all the $E_j$ are different, since if $a^j\cdot x+b^j=a^k\cdot x+ b^k$ on an open set, $a^j=a^k$ which we have assumed is not the case.

It is clear that
$$
\dhash\phi=\sum \chi_{E_j} (a^j)^{\#},
$$
where   $\chi_{E_j}$ is the characteristic function of the polyhedron  $E_j$.
 Since $d\chi_{E_j}=[\partial E_j]$ for some choice of orientation, we get
$$
d\dhash\phi=\sum [F_l]\wedge v_l^{\#}
$$
for some $(1,0)$ (constant) covectors $v_l$, where $F_l$ is an enumeration of the faces of the polyhedron $E_j$. These  $v_l$  must be normal to $F_l$, because $d\dhash\phi$ is symmetric (see the next section for this).  Positivity of $d\dhash\phi$  implies that they point in the same direction as the normal to $F_l$ determining the orientation. 

In this way $d\dhash\phi$ describes the {\it tropical variety} defined by the faces $F_l$, endowed with the multiplicity vectors $v_l$. (Perhaps it would be more proper to talk of quasitropical variety since the multiplicity vectors are not necessarily integral.) The fact that $d\dhash\phi$ is closed is equivalent to the {\it balancing condition} in tropical geometry: 
At a point where several faces intersect, the sum of their multiplicity vectors vanish (see \cite{Lagerberg}). Conversely, Lagerberg shows that a positive closed supercurrent of bidegree $(1,1)$ with support of dimension $n-1$ (see \cite{Lagerberg} for precise, and also more general,  statements) equals $d\dhash\phi$ for some quasitropical polynomial $\phi$.

\section{Supercurrents associated to general sumanifolds of $\R^n$}

Let $M$ be a smooth submanifold of $\R^n$ of dimension $m$. Given an orientation of $M$ we get the current of integration of $M$. Let us first assume that $M$ is a hypersurface, locally defined by an equation $\rho=0$, where $\rho$ is smooth and has nonvanishing gradient on $M$. Dividing by $|d\rho|$, we may assume that $|d\rho|=1$ on $M$, and we let $n=d\rho$; it is a unit normal form on $M$. Now it is a familiar fact that the current of integration on $M$ can be written
$$
[M]= n *dS_M
$$
where $dS_M$ is the surface measure on $M$ and the Hodge star indicates that we think of it as a current of degree zero. The choice of sign of $n$ determines the orientation of $M$. 
From this formula we see in particular that if $[M]\wedge v^{\#}$ is symmetric, then $v$ must be a multiple of $n$, hence normal to $M$ (we used this at the end of the last section, with $M=F_j$). Moreover, if $[M]\wedge v^{\#}$ is positive, $v$ is a positive multiple of $n$.

Now we define the supercurrent $[M]_s$ by
$$
[M]_s= [M]\wedge n^{\#}= n\wedge n^{\#} *dS_M.
$$
It is clearly positive and symmetric. More generally, if $M$ has codimension $p$, it is locally defined by $p$ equations $\rho_j=0$, such that $d\rho_j$ are linearly independent on $M$. Replacing $\rho_j$ by $\sum a_{j k}\rho_k=: \rho_j'$,
for a suitable matrix of functions $a_{j k}$, we may assume that $n_j:=d\rho_j$  are orthonormal on $M$. Then the currents of integration on $M$ can be written
$$
[M]=n_1\wedge...n_p *dS_M=: n*dS_M; \quad n=n_1\wedge...n_p.
$$
The supercurrent associated to $M$ is defined as 
$$
[M]_s= c_p n\wedge n^{\#} *dS_M.
$$
The definition uses the forms $n_j$ that are only locally defined, but it is easily verified that a different choice  $n_j'$ leads to the same supercurrent, since $n_j$ and $n_j'$ are related on $M$ by an orthogonal transformation.
 If $\alpha$ is a superform on the ambient space,
$$ 
\alpha\wedge [M]_s
$$
kills all the components of $\alpha$ in the directions $n_j$ and $n_j^{\#}$; we will interpret it as the restriction of $\alpha$ to $M_s$; the 'superspace' associated to $M$. Accordingly, we say that $\alpha$ vanishes on $M_s$ if (and only if) $\alpha\wedge [M]_s=0$

In general, $[M]_s$ is not closed, unless $M$ is linear. When computing $d[M]_s$ we will have use for the $(1,1)$-forms
$$
F:=d n^{\#}, \quad F_j=d n_j^{\#}.
$$
These forms depend on the choice of defining function(s), but transform in a natural way when we change defining functions. When the codimension $p=1$, the restriction  of $F$ to $M$ is the second fundamental form of $M$. In higher codimension the $F_j$ restricted to $M$ are the components of the vector valued second fundamental form $\sum F_j\otimes \vec{n}_j$ with values in the normal bundle of $M$. 

Again, we start with the case $p=1$. Then 
$$
d[M]_s= d([M]\wedge n^{\#})=-F\wedge [M],
$$
since $[M]$ is $d$-closed.
This expresses $d[M]_s$ in terms of $[M]$ but we want to write it in terms of $[M]_s$. Therefore we introduce the operator
$$
\F= F\otimes n^{\#}\rfloor
$$
which acts on a superform or supercurrent by first contracting with $n^{\#}$ and then wedging with $F$. Notice that $\F$ is an antiderivation. Then
\be
d[M]_s=\F [M]_s.
\ee
In a similar way, when $p>1$ we let
$$
\F=\sum_j F_j\otimes n_j^{\#}\lfloor,
$$
and still get $d[M]_s=\F[M]_s$. In the same way we get that
\be
\dhash [M]_s = -\F^{\#} [M]_s,
\ee
where 
$$
\F^{\#}=\sum_j F_j      \otimes n_j\lfloor.
$$
Because of the following lemma, we can also write $\F=\sum_j n_j^{\#}\lfloor\otimes F_j$, i. e. we can first wedge with $F_j$ and then contract. 
\begin{lma}
$ n_j\rfloor F_j=0 =n_j^{\#}\rfloor F_j$ on $M_s$, i. e. when wedged with $[M]_s$.
\end{lma}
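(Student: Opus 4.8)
The plan is to contract $F_j=d\dhash\rho_j$ explicitly and recognize each contraction as the differential (ordinary, resp. $\dhash$) of the function $\tfrac12|d\rho_j|^2$, which is locally constant along $M$. After the normalization we have $n_j=d\rho_j$ and $n_j^{\#}=\dhash\rho_j$, so that
$$
F_j=d\dhash\rho_j=\sum_{k,l}\frac{\partial^2\rho_j}{\partial x_k\partial x_l}\,dx_k\wedge d\xi_l
$$
is a symmetric $(1,1)$-form whose coefficient matrix is the Hessian $H=(\partial_k\partial_l\rho_j)$. I set $g_j:=\tfrac12|d\rho_j|^2=\tfrac12\sum_k(\partial_k\rho_j)^2$.

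First I would carry out the two contractions directly, using that $\rfloor$ is dual to $\wedge$ for the orthonormal frame $dx_i,d\xi_i$, so that $dx_m\rfloor(dx_k\wedge d\xi_l)=\delta_{mk}\,d\xi_l$ and $d\xi_m\rfloor(dx_k\wedge d\xi_l)=-\delta_{ml}\,dx_k$. Writing $n_j=\sum_m(\partial_m\rho_j)\,dx_m$ and contracting, the symmetry of $H$ yields the global identities
$$
n_j\rfloor F_j=\sum_l\Big(\sum_k(\partial_k\rho_j)(\partial_k\partial_l\rho_j)\Big)\,d\xi_l=\dhash g_j,\qquad n_j^{\#}\rfloor F_j=-\,dg_j,
$$
valid as superforms on the whole ambient space. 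Thus both contractions are exact, resp. $\dhash$-exact, forms of the single function $g_j$.

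The key step is to invoke the normalization. Since $n_1,\dots,n_p$ are orthonormal \emph{on} $M$, we have $g_j\equiv\tfrac12$ there, i.e.\ $g_j$ is constant along $M$. Hence the tangential part of $dg_j$ vanishes on $M$, so $dg_j|_M$ is conormal, a linear combination $\sum_i c_{ji}\,n_i$ of the $n_i=d\rho_i$; applying $J$ gives $\dhash g_j|_M=\sum_i c_{ji}\,n_i^{\#}$. Because
$$
[M]_s=c_p\,n_1\wedge\cdots\wedge n_p\wedge n_1^{\#}\wedge\cdots\wedge n_p^{\#}\,{*}dS_M
$$
already contains every factor $n_i$ and $n_i^{\#}$, wedging either $dg_j$ or $\dhash g_j$ with $[M]_s$ forces a repeated factor and the product vanishes. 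Only the restriction to $M$ enters, since $[M]_s$ is of order zero and supported on $M$. This gives $(n_j^{\#}\rfloor F_j)\wedge[M]_s=0$ and $(n_j\rfloor F_j)\wedge[M]_s=0$, which is exactly the assertion that $n_j\rfloor F_j=0=n_j^{\#}\rfloor F_j$ on $M_s$.

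The computation itself is routine once the Hessian symmetry is used; the one point to stay careful about is that orthonormality of the $n_j$ is imposed only on $M$, so one cannot claim $dg_j\equiv 0$ — only its tangential component dies along $M$. The (very mild) crux is precisely that the surviving normal component is a combination of the $n_i$ (resp.\ $n_i^{\#}$), and such conormal terms are annihilated by $[M]_s$.
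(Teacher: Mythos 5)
Your proof is correct and follows essentially the same route as the paper's: identify $n_j\rfloor F_j$ (resp.\ $n_j^{\#}\rfloor F_j$) with $\dhash$ (resp.\ $-d$) of $\tfrac12|d\rho_j|^2$ and use that this function is constant along $M$. You handle general codimension directly where the paper reduces to $p=1$ for notational ease, and you make explicit the final step the paper leaves implicit --- that the surviving conormal component of $dg_j$ is annihilated upon wedging with $[M]_s$ --- which is a worthwhile clarification but not a different argument.
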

\begin{proof} We prove this when $p=1$. This is only  to simplify the index notation; in the proof $n_j$ denotes the components of $n$, i. e. the partial derivatives of $\rho$ with respect to $x_j$. Then
$$
n\rfloor F=\sum n_j \rho_{j k}d\xi_k= 1/2\sum \frac{ \partial n_j^2}{\partial x_k}d\xi_k=1/2 \dhash |n|^2.
$$
Since the norm of $n$ is constant on $M$, this vanishes on $M_s$.
\end{proof}

\section{Minimal submanifolds}
We start with the following computational proposition. 
\begin{prop}
If $M$ is a smooth $m$-dimensional submanifold of $\R^n$
$$
d\left([M]_s\wedge \beta^p\right)= \sum_j \n_jhash\rfloor\left(F_j\wedge [M]_s\wedge \beta^p\right).
$$
\end{prop}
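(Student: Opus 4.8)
The plan is to reduce the identity to the formula $d[M]_s=\F[M]_s$ established in the previous section and then to recognize the right-hand side as a mere rewriting of $\F[M]_s\wedge\beta^p$, obtained by pulling the interior multiplication outside the wedge products. No deep idea is needed beyond the antiderivation property of contraction together with two vanishing facts already proved.

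First I would use that $\beta$ has constant coefficients, so that $d\beta=0$ and $d\beta^p=0$. The Leibniz rule then gives
\[
d\left([M]_s\wedge\beta^p\right)=\left(d[M]_s\right)\wedge\beta^p=\left(\F[M]_s\right)\wedge\beta^p=\sum_j F_j\wedge\left(n_j^{\#}\rfloor[M]_s\right)\wedge\beta^p,
\]
the last equality being just the definition $\F=\sum_j F_j\otimes n_j^{\#}\rfloor$ (contract with $n_j^{\#}$, then wedge with $F_j$). It therefore suffices to show that the right-hand side of the Proposition equals this last sum.

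Next I would expand each summand on the right using that interior multiplication with the $1$-form $n_j^{\#}$ is an antiderivation. Since $F_j$ has total degree $2$ and $[M]_s$ has even total degree $2p$, all the signs are $+1$, and
\[
n_j^{\#}\rfloor\left(F_j\wedge[M]_s\wedge\beta^p\right)=\left(n_j^{\#}\rfloor F_j\right)\wedge[M]_s\wedge\beta^p+F_j\wedge\left(n_j^{\#}\rfloor[M]_s\right)\wedge\beta^p+F_j\wedge[M]_s\wedge\left(n_j^{\#}\rfloor\beta^p\right).
\]
The first term vanishes by the Lemma of the previous section, which asserts $n_j^{\#}\rfloor F_j=0$ on $M_s$; and the middle term, summed over $j$, is precisely the expression for $d([M]_s\wedge\beta^p)$ found above. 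Thus everything comes down to the third term.

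For the third term I would invoke the contraction rules from Section 2: $n_j^{\#}\rfloor\beta=J(n_j^{\#})=-n_j$, so that $n_j^{\#}\rfloor\beta^p=-p\,n_j\wedge\beta^{p-1}$. Hence
\[
\sum_j F_j\wedge[M]_s\wedge\left(n_j^{\#}\rfloor\beta^p\right)=-p\sum_j F_j\wedge[M]_s\wedge n_j\wedge\beta^{p-1},
\]
and each summand is zero because $[M]_s\wedge n_j=0$: the covector $n_j$ already occurs among the factors $n_1\wedge\cdots\wedge n_p$ of $[M]_s$, so wedging $[M]_s$ with $n_j$ annihilates it. This gives the asserted identity. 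The computation is essentially bookkeeping of signs and degrees; the one place that must be handled with care is the vanishing of the third term, which relies on two independent mechanisms — the Lemma for the piece $n_j^{\#}\rfloor F_j$, and $[M]_s\wedge n_j=0$ for the piece $n_j^{\#}\rfloor\beta$ — conspiring to leave exactly $\F[M]_s\wedge\beta^p$. Note finally that nothing in the argument uses any particular value of the exponent, so the formula holds for every power of $\beta$.
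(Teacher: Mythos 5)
Your proposal is correct and follows essentially the same route as the paper: both reduce to $d[M]_s=\F[M]_s$ via $d\beta=0$, then use Lemma 4.1 to absorb the term $n_j^{\#}\rfloor F_j$ and the identity $n_j^{\#}\rfloor\beta=-n_j$ together with $n_j\wedge[M]_s=0$ to move the contraction outside the whole product. The only difference is presentational (you expand the right-hand side by the antiderivation rule and check each term, while the paper builds up from the left), and your sign bookkeeping is accurate.
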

\begin{proof}
  We have, since $d\beta=0$, 
  $$
  d\left([M]_s\wedge \beta^p\right)=\sum F_j\wedge n_j^{\#}\rfloor [M]_s\wedge\beta^p.
  $$
  Since, by Lemma 4.1, $n_j^{\#}\rfloor (F_j)\wedge[M]_s=0$, this equals
  $$
  \sum n_j^{\#}\rfloor(F_j\wedge[M]_s)\wedge\beta^p.
  $$
  Finally, $n_j^{\#}\rfloor\beta=-n_j$, which vanishes when wedged with $[M]_s$.
  This gives the formula in the proposition.
\end{proof}
If $F=\sum F_{i j}dx_i\wedge d\xi_j$ is a $(1,1)$-form,
 
$$F\wedge\beta^{n-1}/(n-1)!= tr(F)\beta^n/n!,
$$ 
 where $tr(F)=\sum F_{i i}$ is the trace of $F$. This implies that
$$
F\wedge[M]_s\wedge\beta^{m-1}/(m-1)!= tr'(F) [M]_s\wedge\beta^m/m!,
$$
where $tr'(F)$ is the trace of $F$:s restriction to $M_s$, since wedging with $[M]_s$ kills all the components of $F$ and $\beta$ in the normal directions. The traces $tr'(F_j)=:H_j$ are the coefficients of the {\it mean curvature vector}
$$
\vec{H}:=\sum H_j\vec{n_j}.
$$
(There seem to be different conventions as to the sign of the mean curvature vector. We follow here the convention in \cite{2Colding-Min}, so that the mean curvature of a sphere points outwards.)

Applying Proposition 5.1 with $p=m-1$ we get
\be
d([M]\wedge\beta^{m-1}/(m-1)!)=\sum H_j n_j^{\#}\rfloor([M]_s\wedge\beta^m/m!).
\ee
Since $n_j$ are linearily independent, this vanishes exactly when  $\vec{H}$ vanishes, i. e. when $M$ is a {\it minimal manifold}, so we have proved

\begin{cor}
$[M]_s\wedge\beta^{m-1}$ is a closed current if and only if $M$ is a minimal submanifold.
\end{cor}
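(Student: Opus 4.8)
The plan is to read the corollary off directly from formula (5.1), which was obtained by specializing Proposition 5.1 to $p=m-1$. First I would note that multiplication by the nonzero constant $1/(m-1)!$ does not affect closedness, so $[M]_s\wedge\beta^{m-1}$ is a closed current if and only if $d\bigl([M]_s\wedge\beta^{m-1}/(m-1)!\bigr)=0$. By (5.1) this exterior derivative equals
$$
\sum_j H_j\, n_j^{\#}\rfloor\bigl([M]_s\wedge\beta^m/m!\bigr),
$$
so the whole statement reduces to showing that this sum vanishes exactly when the mean curvature vector $\vec H=\sum_j H_j\vec n_j$ vanishes.

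One direction is immediate: if $M$ is minimal then $\vec H=0$, and since the $\vec n_j$ are linearly independent on $M$ this forces every coefficient $H_j$ to vanish, whence the right hand side is manifestly zero and $[M]_s\wedge\beta^{m-1}$ is closed.

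For the converse I would argue via the linear independence of the contracted top-degree forms. Here $[M]_s\wedge\beta^m/m!$ is the surface measure on $M$ regarded as an $(n,n)$-current; locally write it as $\mu\,\omega$, where $\mu=*dS_M\ge 0$ and $\omega=dx_1\wedge d\xi_1\wedge\cdots\wedge dx_n\wedge d\xi_n$ is the top superform. The right hand side then becomes $\sum_j (H_j\mu)\,(n_j^{\#}\rfloor\omega)$. The \emph{key} observation is that contraction against the fixed nondegenerate top form $\omega$ is an injective linear map from $(0,1)$-forms to $(n,n-1)$-forms, so the forms $n_j^{\#}\rfloor\omega$ inherit the linear independence of the conormal forms $n_j^{\#}$ (independent because the $n_j=d\rho_j$ are). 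Hence the vanishing of the sum forces each measure $H_j\mu$ to vanish separately. Since $\mu$ is the surface measure, with full support on $M$, and the $H_j$ are continuous, this gives $H_j\equiv0$ on $M$ for every $j$, i.e.\ $\vec H=0$ and $M$ is minimal.

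The only step requiring any care is the linear-independence argument of the last paragraph; everything else is bookkeeping built on Proposition 5.1. I expect the main (and mild) obstacle to be making the contraction of the measure-valued top current $\mu\,\omega$ rigorous, namely justifying $n_j^{\#}\rfloor(\mu\,\omega)=\mu\,(n_j^{\#}\rfloor\omega)$ at the level of currents and upgrading the pointwise independence of the form factors to the separate vanishing of the scalar measures $H_j\mu$. This is routine once one pairs the identity against test superforms of complementary bidegree, choosing duals to the independent directions $n_j^{\#}\rfloor\omega$ to isolate one coefficient at a time.
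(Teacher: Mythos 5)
Your proposal is correct and follows the paper's own route exactly: the paper also derives the corollary by setting $p=m-1$ in Proposition 5.1 to obtain formula (5.1) and then invokes the linear independence of the $n_j$ to conclude that the right-hand side vanishes precisely when $\vec H=0$. The only difference is that you spell out the linear-independence step (contraction against the nondegenerate top form preserving independence, and the full support of the surface measure) which the paper compresses into one sentence.
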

We are therefore led to the following 
\medskip

  \noindent{\bf Definition:}   {\it A positive symmetric supercurrent $T$ of bidimension $(m,m)$ is minimal if 
  $$d T\wedge \beta^{m-1}=0$$}.

    \qed

  \medskip
  
 It is clear that this property is conserved by regularisation, e.g. by convolution with an approximate identity.  For a general positive supercurrent of bidimension $(m,m)$, if it happens that there is a vector field $\vec{V}=\sum V_j \partial/\partial x_j$ such that
 $$
 dT\wedge \beta^{m-1}/(m-1)!= \sum V_j d\xi_j\rfloor(T\wedge\beta^m/m!),
 $$
 we say that $\vec{V}$ is a mean curvature vector for $T$. As we have seen, this is the case when $T=[M]_s$ is associated to a smooth manifold, but it certainly also holds when $T$ is strictly positive and smooth. In both these cases,  $\vec{V}$ is uniquely determined,  so we may speak of {\it the} mean curvature vector. In section 8 we shall see that
 $$
 d\dhash (T\wedge\beta^{m-1}/(m-1)!)
 $$
 can be computed in terms of the flow of the mean curvature vector field when it exists.

On a minimal supercurrent we can also define a Dirichlet form:
\be
D(u,u):=\int_T du\wedge \dhash u\wedge\beta^{m-1}/(m-1)!=T.du\wedge \dhash u\wedge\beta^{m-1}/(m-1)!,
\ee
if $u$ is sufficiently smooth on $\R^n$. When $T=[M]_s$ is the supercurrent associated to an $m$-dimensional manifold, this is precisely the standard Dirichlet form
$$
\int_M |d_M u|^2 dS_M,
$$
where $d_M u$ is the differential of $u$ restricted to $M$, and $|d_Mu|$ is the norm  induced by the Euclidean metric on $\R^n$. 

If $u$ has compact support, this equals
$$
\int_T ud\dhash u\wedge\beta^{m-1}/(m-1)!.
$$
It is therefore natural to define the Laplacian of $u$ by
$$
\Delta_T u =d\dhash u\wedge T\wedge \beta^{m-1}/(m-1)!.
$$
With this definition, $\Delta_T u$ is a measure on $\R^n$. In some cases, e.g. when $T$ is the supercurrent of a manifold, or a smooth strictly positive form, we can write
$$
\Delta_T u=\tilde\Delta_T u T\wedge\beta^m/m!,
$$
where $\tilde\Delta_T$ is a scalar valued Laplacian. In any case, we say that 
 $u$ is {\it harmonic (subharmonic) on $T$} if $\Delta_T u=0$ (or $\Delta_T u\geq 0$). Notice that, just as in the case of K\"ahler metrics, the Laplacian on a minimal supercurrent has no first order terms, so  linear functions are harmonic. For minimal manifolds this is a well known property, cf \cite{Colding-Min}.  
\section{  Volume estimates for minimal submanifolds}
To prove volume estimates for minimal manifolds (or supercurrents), we will now follow the method of Lelong to prove such estimates in the complex setting. 
 This requires
one little twist since the minimal supercurrent $T$ (e.g. $T=[M]_s$) is not closed itself; it is only $T\wedge\beta^{m-1}$ that is closed. This is taken care of by the following lemma.
\begin{lma}
Let for $\delta\geq 0$, $|x|_\delta:=({|x|}^2+\delta)^{1/2}$,  and let for  $p>0$
$$ 
E_{p,\delta}:=\frac{-1}{p} |x|_\delta^{-p},
$$
and for $p=0$ 
$$
E_{0,\delta}=\log |x|_\delta.
$$
Then, if $p$ is an integer,  
$$
d\dhash E_{p,\delta}\wedge\beta^{p+1}=(d\dhash |x|_\delta)^{p+2}.
$$
\end{lma}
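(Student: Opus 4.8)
The plan is to reduce everything to two clean formulas for $d\dhash$ of a radial function, after which the wedge computation collapses because of a nilpotency. Write $u:=|x|_\delta$, so that $u^2=|x|^2+\delta$ and $\partial u/\partial x_k=x_k/u$. Set $\eta:=\sum_j x_j\,dx_j=(1/2)d|x|^2$, so that $du=u^{-1}\eta$ and $\dhash u=u^{-1}\eta^{\#}$, and put
\[
C:=du\wedge\dhash u=u^{-2}\,\eta\wedge\eta^{\#}.
\]
Recalling that $\beta=(1/2)d\dhash|x|^2$, the two identities I aim for are
\[
d\dhash u=u^{-1}(\beta-C),\qquad d\dhash E_{p,\delta}=u^{-(p+2)}\bigl(\beta-(p+2)C\bigr),
\]
valid for every $p\ge 0$ (the second read as $d\dhash\log u=u^{-2}(\beta-2C)$ when $p=0$).

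First I would establish these by the chain rule. For any smooth $f$ the form $d\dhash f=\sum_{j,k}(\partial^2 f/\partial x_j\partial x_k)\,dx_j\wedge d\xi_k$ is just the Hessian viewed as a $(1,1)$-form. For $f=f(u)$ one has $\partial f/\partial x_k=(f'(u)/u)\,x_k$, and differentiating once more, separating the $\delta_{jk}$ term from the $x_jx_k$ term, gives
\[
d\dhash f=\frac{f'(u)}{u}\,\beta+\Bigl(\frac{f'(u)}{u}\Bigr)'\frac{1}{u}\,\eta\wedge\eta^{\#}.
\]
Substituting $f(u)=u$ and $f(u)=-(1/p)u^{-p}$ (respectively $f=\log u$), and rewriting $\eta\wedge\eta^{\#}=u^2C$, yields the two displayed formulas. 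This is the only place where a genuine computation occurs, and getting the coefficient of $C$ right is the one point to be careful about.

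The algebraic finish is then immediate. Since $du\wedge du=0$ we have $C\wedge C=0$, and since $\beta$ and $C$ are both of even total degree they commute under $\wedge$. Hence the binomial expansion of $(\beta-C)^{p+2}$ truncates after the linear term:
\[
(\beta-C)^{p+2}=\beta^{p+2}-(p+2)\,\beta^{p+1}\wedge C.
\]
Therefore $(d\dhash u)^{p+2}=u^{-(p+2)}\bigl(\beta^{p+2}-(p+2)\,\beta^{p+1}\wedge C\bigr)$, while from the second formula $d\dhash E_{p,\delta}\wedge\beta^{p+1}=u^{-(p+2)}\bigl(\beta^{p+2}-(p+2)\,C\wedge\beta^{p+1}\bigr)$. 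As $C\wedge\beta^{p+1}=\beta^{p+1}\wedge C$, the two sides coincide, which is the assertion.

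The hypothesis that $p$ is a (nonnegative) integer enters only to give meaning to the powers $\beta^{p+1}$ and $\beta^{p+2}$; no positivity or convexity is used, and the identity holds formally even when $p+2>n$, both sides then vanishing. The hard part, such as it is, is purely the second-derivative bookkeeping in the middle step; once the two radial formulas are in hand, the nilpotency $C\wedge C=0$ does all the remaining work, exactly as the analogous $(d\dhash\log|x|^2)$-type expressions collapse in the complex Lelong theory.
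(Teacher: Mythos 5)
Your proof is correct and is essentially the paper's own argument: both compute $d\dhash|x|_\delta$ and $d\dhash E_{p,\delta}$ as $u^{-1}(\beta-C)$ and $u^{-(p+2)}(\beta-(p+2)C)$ (the paper writes $\gamma/|x|_\delta^2$ for your $C$) and then expand $(\beta-C)^{p+2}$ by the binomial theorem, the expansion truncating because $C\wedge C=0$. The only differences are cosmetic: you spell out the chain-rule computation and the $p=0$ case, which the paper leaves as "similar but simpler."
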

\begin{proof}
This is a direct computation and we will do it for $p>0$, the case $p=0$ being similar but simpler. First,
$$
\dhash E_{p,\delta}=|x|_\delta^{-(p+2)}(1/2)\dhash |x|^2
$$
and 
\be
d\dhash E_{p,\delta}=|x|_\delta^{-(p+2)}\left( \beta- (p+2)\frac{\gamma}{{|x|_\delta}^2}\right),
\ee
with $\gamma =d|x|^2\wedge \dhash |x|^2/4$.  On the other hand
$$
d\dhash |x|_\delta =|x|_\delta^{-1}(\beta-\frac{\gamma}{{|x|_\delta}^2}).
$$
Expanding by the binomial theorem we get
\be
(d\dhash |x|_\delta)^{p+2}=|x|_\delta^{-(p+2)}\left(\beta^{p+2}-(p+2)\frac{\gamma}{{|x|_\delta}^2}\wedge \beta^{p+1}\right).
\ee
The lemma follows from (6.1) and (6.2).
\end{proof}
The reason we consider $E_{p, \delta}$ instead of $E_{p,0}$ is just that we want our functions to be smooth across zero. The main conclusion we draw from the lemma is that
$$
d\dhash E_{m-2,\delta}\wedge\beta^{m-1}\wedge T
$$
is positive if $T$ is positive of bidegree $(m,m)$. This follows from Proposition (2.1) since $|x|_\delta$ is convex. We also remark that it follows from the proof of the proposition that
\be
d\dhash E_{p,\delta}\wedge \beta^{m-1}\wedge T\geq 0
\ee
if $0\leq p\leq m-2$.

Let $T$ be a minimal current of bidimension $(m,m)$, defined in a neighbourhood of the origin. Its mass in a ball of radius $r$ centered at the origin is
$$
\sigma(r):=\int_{|x|<r} T\wedge\beta^m/m!.
$$
In the computations below we first assume that $T$ is smooth. Then, writing $S=T\wedge \beta^{m-1}/m!$
$$
\sigma(r)=
$$
\be
\int_{|x|=r} (1/2)\dhash |x|^2\wedge S= \int_{|x|=r} |x|_\delta^{m} \dhash E_{m-2,\delta}\wedge S= (r^2+\delta)^{m/2}\int_{|x|<r}(d\dhash |x|_\delta)^m \wedge T/m!,
\ee
since $dS=0$. Since the integrand in the right hand side is nonnegative it follows that
$$
(r^2+\delta)^{-(m/2)}\sigma(r)
$$
is (weakly) increasing. This holds for any $\delta>0$, so $ r^{-m}\sigma(r)$ is also increasing. By approximation with smooth forms, this holds also for general minimal currents, so we have proved the following generalization of the {\it monotonicity theorem} for minimal manifolds (see \cite{Colding-Min}).
\begin{thm} Let $T$ be a minimal (super)current of bidimension $(m,m)$ defined in a neighbourhood of the origin. Then
  $$
  r^{-m}\int_{|x|<r} T\wedge\beta^m/m!
  $$
  is nondecreasing.
\end{thm}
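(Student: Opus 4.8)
The plan is to reproduce Lelong's monotonicity argument, which the computation culminating in (6.4) already carries out; the structural key is that although $T$ itself is not closed, minimality guarantees that $S:=T\wedge\beta^{m-1}/m!$ \emph{is} closed, and this closedness is precisely what licenses the two integrations by parts below. I would first reduce to the case in which $T$ is a smooth positive form: by the remark following the definition of minimality, convolution with an approximate identity yields smooth positive minimal currents $T_\varepsilon$, and since the mass passes to the limit it suffices to treat smooth $T$.

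For smooth $T$ the argument runs in two Stokes steps. Writing $\beta=(1/2)d\dhash\abs{x}^2$ and using $dS=0$ gives $T\wedge\beta^m/m!=(1/2)d\paren{\dhash\abs{x}^2\wedge S}$, so Stokes' formula converts the mass into a boundary integral,
$$
\sigma(r)=\int_{\abs{x}=r}(1/2)\,\dhash\abs{x}^2\wedge S.
$$
On the sphere $\abs{x}=r$ the quantity $\abs{x}_\delta=(r^2+\delta)^{1/2}$ is constant and $(1/2)\dhash\abs{x}^2=\abs{x}_\delta^{\,m}\,\dhash E_{m-2,\delta}$, so factoring out the constant and applying Stokes a second time (again via $dS=0$) produces
$$
\sigma(r)=(r^2+\delta)^{m/2}\int_{\abs{x}<r} d\dhash E_{m-2,\delta}\wedge S.
$$
By Lemma 6.1 with $p=m-2$ the integrand equals $(d\dhash\abs{x}_\delta)^m\wedge T/m!$, which is a nonnegative measure because $\abs{x}_\delta$ is convex and $T$ is positive (Proposition 2.1). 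Hence $(r^2+\delta)^{-m/2}\sigma(r)$, being the integral of a fixed positive measure over the expanding family of balls $\{\abs{x}<r\}$, is nondecreasing in $r$; letting $\delta\to 0$ and using that a pointwise limit of nondecreasing functions is nondecreasing gives the monotonicity of $r^{-m}\sigma(r)$.

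The main obstacle is analytic rather than algebraic. The delicate points are justifying the two Stokes identities at the level of currents (where $T$ may have only measure coefficients), making the regularization rigorous — checking that each $T_\varepsilon$ is again positive and minimal and that $\sigma_\varepsilon(r)\to\sigma(r)$ at the continuity points of $\sigma$ — and controlling the passage $\delta\to 0$, where one must ensure that the monotonicity obtained for each fixed $\delta>0$ survives the limit. None of these steps is hard in isolation, but they are where the care must go, the pointwise identity (6.4) itself being a routine consequence of Lemma 6.1.
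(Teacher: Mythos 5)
Your proposal is correct and follows exactly the paper's own argument: reduction to smooth $T$ by regularization, the two Stokes steps exploiting $dS=0$, the identification of the integrand via Lemma 6.1 with $(d\dhash\abs{x}_\delta)^m\wedge T/m!\geq 0$, and the passage $\delta\to 0$. The analytic caveats you flag (justifying Stokes at current level, the approximation step) are precisely the points the paper also treats briefly, so there is nothing to add.
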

When $T$ is the supercurrent of a minimal manifold, this says that the area of the  manifold inside a ball of radius $r$, divided by $r^{-m}$ is nondecreasing.
In analogy with the case of minimal manifolds we call
$$
\gamma_T(0):=\lim_{r\to 0} r^{-m}\int_{|x|<r} T\wedge\beta^m/m!,
  $$
the density of $T$ at the origin. When $T$ is the supercurrent of a (smooth)
minimal manifold it equals $\omega_m$, the volume of $m$-dimensional unit ball. (The corresponding limit for  closed positive $(m,m)$-currents in complex analysis is the {\it Lelong number} of the current.)

Let us now look again at the formula
$$
\int_{|x|<r} T\wedge\beta^m/m!=(r^2+\delta)^{m/2}\int_{|x|<r}T\wedge(d\dhash |x|_\delta)^m/m!.
$$
We proved this for $T$ smooth, but by approximation it holds for general minimal supercurrents of bidimension $(m,m)$. It means in particular that the integral in the right hand side is bounded as $\delta\to 0$, so there is a subsequence of $\delta$:s such that
$$
T\wedge (d\dhash |x|_\delta)^m/m!
$$
converges weakly to a measure, $\mu$. Outside the origin, where $|x|$ is smooth, this measure must be
$$
T\wedge(d\dhash |x|)^m/m!.
$$
We also see that $\mu$ must have a point mass at the origin of size $\gamma_T(0)$, since the mass of $\mu$ in any  ball centered at the origin with small radius $r$ is
$$
r^{-m}\int_{|x|<r} T\wedge\beta^m/m!.
$$
Thus, every subsequence has the same limit
$$
\mu=\chi_{x\neq0}T\wedge(d\dhash |x|)^m/m! +\gamma_T(0)\delta_0.
$$
By Lemma 4.3 we also have
$$
\lim_{\delta\to 0} T\wedge d\dhash E_{m-2,\delta}\wedge \beta^{m-1}/m!=\mu,
$$
which can be interpreted as saying that the 'Laplacian' of $E_{m-2,0}$ on $T$ (the trace of $d\dhash E_{m-2, 0}$) equals a point mass at the origin of size $\gamma_T$, plus a nonnegative contribution outside the origin. The contribution outside the origin vanishes when $T$ is the supercurrent of an $m$-dimensional plane through the origin. This reflects the fact that $E_{m-2,0}$ is a fundamental solution of the Laplacian then; the crucial observation is that $E_{m-2,0}$ is always 'subharmonic on $T$'.

We shall next generalize the proof of Theorem 6.2 to a general domain, $D$. Then $|x|$ is not constant on the boundary of $D$, so instead we write  $|x|^m =w(x)$ on the boundary , where $w$ is a positive smooth function on the closure of $D$ to be chosen later.
Let $T$ be a minimal supercurrent (which we tacitly take as smooth at first) in a neighbourhood of $\bar D$. Then the mass of $T$ in $D$ is
$$
\int_D T\wedge\beta^{m}/m!= \int_{\partial D} \dhash |x|^2/2\wedge S/m!=\int_{\partial D} w \dhash E_{m-2}\wedge S/m!.
$$
(Here we skip the part of the argument where we approximate $|x|$ by $|x|_\delta$ and we write $E_{m-2}$ for $E_{m-2,0}$.) By Stokes' theorem this equals
$$
\int_D w d\dhash E_{m-2}\wedge S/m! +\int_D d w\wedge \dhash E_{m-2}\wedge S/m!=: I + II
$$
since $S$ is closed.
By what we have just seen, $I\geq w(0)\gamma_T(0)$. To see when $II$ is positive we compute (for $m>2$)
\be
0=(m-2)^{-1}\int_{\partial D} \left( \frac{1}{|x|^{m-2}} -\frac{1}{w^{1-2/m}}\right) \dhash w\wedge S=
\ee
$$
\frac{1}{m-2}(\int_D\left( \frac{1}{|x|^{m-2}} -\frac{1}{w^{1-2/m}}\right)d\dhash w\wedge S+m^{-1}\int_D\frac{1}{w^{2-2/m}}dw\wedge\dhash w\wedge S -\int_D dE_{m-2}\wedge\dhash w\wedge S. 
$$ 
Since
$$
d\phi\wedge\dhash\psi\wedge S=d\psi\wedge\dhash \phi\wedge S
$$
when $S$ is symmetric we find that
$$
II=\frac{1}{m-2}\int_D\left( \frac{1}{|x|^{m-2}} -\frac{1}{w^{1-2/m}}\right)d\dhash w\wedge S/m!+m^{-1}\int_D\frac{1}{w^{2-2/m}}dw\wedge\dhash w\wedge S/m!.
$$
It follows that $II\geq 0$ if $w\geq |x|^m$ in $D$ and $w$ is convex.
\begin{thm} Let $D$ be a smoothly bounded domain in $\R^n$.  Let $a$ be any point in $D$ and let $w$ be a convex function on $\bar D$ such that $w=|x-a|^m$ on the boundary of $D$. If $T$ is a minimal supercurrent in $D$ of bidimension $(m,m)$, then its mass satisfies
  $$
  \int_D T\wedge\beta^m/m!\geq w(a)\gamma_T(a)
  $$
  (where $\gamma_T(a)$ is the density of $T$ at $a$).
\end{thm}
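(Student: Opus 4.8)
The plan is to reduce to the case $a=0$ by translation — the density $\gamma_T$ and every quantity below are translation invariant — and, as in the proof of Theorem 6.2, to prove the estimate first for smooth $T$ and then pass to a general minimal supercurrent by approximation. Write $S:=T\wedge\beta^{m-1}$, which is positive by Proposition 2.1 and closed because $T$ is minimal (so $dS=dT\wedge\beta^{m-1}=0$). Since $\beta=\frac12 d\dhash|x|^2$ and $dS=0$, Stokes' formula gives
$$
\int_D T\wedge\beta^m/m! = \frac12\int_D d\dhash|x|^2\wedge S/m! = \frac12\int_{\partial D}\dhash|x|^2\wedge S/m!.
$$
Using $\frac12\dhash|x|^2=|x|^m\dhash E_{m-2}$ (the computation in Lemma 6.1 with $p=m-2$, writing $E_{m-2}$ for $E_{m-2,0}$) together with the boundary condition $|x|^m=w$ on $\partial D$, the right-hand side becomes $\int_{\partial D} w\,\dhash E_{m-2}\wedge S/m!$. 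A second application of Stokes (again using $dS=0$) then splits the mass as
$$
\int_D T\wedge\beta^m/m! = \int_D w\,d\dhash E_{m-2}\wedge S/m! + \int_D dw\wedge\dhash E_{m-2}\wedge S/m! =: I + II.
$$

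For $I$, I would invoke the analysis carried out just before the theorem: $E_{m-2}$ is subharmonic on $T$, so $d\dhash E_{m-2}\wedge S/m!$ is the sum of a nonnegative measure on $D\setminus\{0\}$ and a point mass $\gamma_T(0)\,\delta_0$ at the origin. Since $w\geq|x|^m\geq0$, it follows that $I\geq w(0)\gamma_T(0)$. Hence the whole theorem reduces to the inequality $II\geq0$, which is the heart of the matter.

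To handle $II$ (I write the computation for $m>2$; the low-dimensional cases are analogous, using $E_0=\log|x|$) I would start from the observation that the form $\big(|x|^{-(m-2)}-w^{-(1-2/m)}\big)\dhash w\wedge S$ vanishes on $\partial D$: there $|x|^m=w$ forces $|x|^{m-2}=w^{1-2/m}$, so the bracket is identically zero on the boundary. Applying Stokes to this form (using $dS=0$) yields an identity relating $\int_D dE_{m-2}\wedge\dhash w\wedge S$ to $\int_D\big(|x|^{-(m-2)}-w^{-(1-2/m)}\big)d\dhash w\wedge S$ and $\int_D w^{-(2-2/m)}dw\wedge\dhash w\wedge S$. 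Combining this with the symmetry relation $d\phi\wedge\dhash\psi\wedge S=d\psi\wedge\dhash\phi\wedge S$ (valid since $S$ is symmetric), which lets me replace $dw\wedge\dhash E_{m-2}$ by $dE_{m-2}\wedge\dhash w$ in $II$, rewrites $II$ as
$$
II=\frac{1}{m-2}\int_D\Big(\frac{1}{|x|^{m-2}}-\frac{1}{w^{1-2/m}}\Big)d\dhash w\wedge S/m! + \frac{1}{m}\int_D\frac{1}{w^{2-2/m}}\,dw\wedge\dhash w\wedge S/m!.
$$
Both integrands are nonnegative under the hypotheses: $d\dhash w\wedge S\geq0$ because $w$ is convex (so $d\dhash w$ is a positive $(1,1)$-form) and $S$ is positive, by Proposition 2.1; $dw\wedge\dhash w$ is a positive $(1,1)$-form of the form $\alpha\wedge\alpha^{\#}$, making the second integrand nonnegative where $w>0$; and the bracket $|x|^{-(m-2)}-w^{-(1-2/m)}$ is nonnegative precisely because $w\geq|x|^m$ in $D$. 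Thus $II\geq0$, and with $I\geq w(0)\gamma_T(0)$ this gives $\int_D T\wedge\beta^m/m!\geq w(0)\gamma_T(0)$, i.e. the claim after translating back to $a$.

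I expect the main obstacle to be the manipulation producing $II\geq0$: choosing the boundary form that vanishes on $\partial D$, carrying out the two integrations by parts, and using the symmetry identity to split $II$ into two separately nonnegative pieces. A secondary point worth stating explicitly is that the argument really uses $w\geq|x-a|^m$ throughout $D$, not merely equality on $\partial D$ — this is what makes the bracket nonnegative and keeps $w\geq0$ for the bound on $I$ (and when $w(a)\leq0$ the theorem is trivial, since the mass is nonnegative). The approximation of a general minimal supercurrent by smooth forms, which preserves minimality and lets one pass to the limit in every integral, is routine, exactly as in Theorem 6.2.
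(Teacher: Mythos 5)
Your argument is essentially identical to the paper's proof: the same splitting of the mass as $I+II$ via Stokes and the identity $\tfrac12\dhash|x|^2=|x|^m\dhash E_{m-2}$, the same lower bound $I\geq w(0)\gamma_T(0)$ from the point mass of $d\dhash E_{m-2}\wedge S$, and the same rewriting of $II$ using the boundary-vanishing form $\bigl(|x|^{-(m-2)}-w^{-(1-2/m)}\bigr)\dhash w\wedge S$ together with the symmetry relation $d\phi\wedge\dhash\psi\wedge S=d\psi\wedge\dhash\phi\wedge S$. The one loose end you flag but do not fully close --- that the argument needs $w\geq|x-a|^m$ throughout $D$, not just equality on $\partial D$ --- is settled in the paper by replacing $w$ with $\max(w,|x-a|^m)$, which is still convex, still equals $|x-a|^m$ on $\partial D$, and can only increase $w(a)$, so the stated inequality follows.
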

(We shall see in the next section that the convexity assumption on $w$ can be relaxed considerably.)

In the proof we may of course  assume that $a=0$. 
For $m>2$ the theorem follows immediately from the argument above, since we can always assume that $w\geq |x|^m$, replacing it if necessary by $\max(w, |x|^m)$. The case $m=2$ is similar; we just have to replace the boundary integral in (6.5) by
$$
\int_{\partial D} \log \frac{w}{|x|^2} \dhash w\wedge S.
$$
As a special case we get a recent result of Brendle and Hung  (\cite{Brendle-Hung}), which generalizes an older estimate of Alexander and Osserman for the case $m=2$ (\cite{Alexander-Osserman}).
\begin{cor} Let $M$ be a minimal manifold in the unit ball which contains the point $a$. Then the volume of $M$ satisfies
  $$
  |M|\geq \omega_m (1-|a|^2)^{m/2}.
  $$
\end{cor}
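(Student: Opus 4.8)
The plan is to specialize Theorem 6.3 to the case $D=\{|x|<1\}$ and $T=[M]_s$, and to exhibit an explicit convex $w$ with $w(a)=(1-|a|^2)^{m/2}$. First I would record the two dictionary entries between the supercurrent quantities and the geometric ones: since $d\lambda_F=[F]_s\wedge\beta^m/m!$, the mass $\int_D [M]_s\wedge\beta^m/m!$ is exactly the volume $|M|$ of $M$ inside the ball; and because $M$ is a smooth minimal manifold passing through $a$, its density there is $\gamma_{[M]_s}(a)=\omega_m$, the volume of the unit $m$-ball (this is the value of the monotone quantity of Theorem 6.2 in the limit $r\to0$ at a smooth point). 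With these identifications Theorem 6.3 reads $|M|\geq w(a)\,\omega_m$, so everything comes down to producing an admissible $w$ with $w(a)=(1-|a|^2)^{m/2}$.

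The key step is the choice
$$
w(x)=\paren{1-2\,x\cdot a+|a|^2}^{m/2}.
$$
On the sphere $|x|=1$ one has $1-2x\cdot a+|a|^2=|x|^2-2x\cdot a+|a|^2=|x-a|^2$, so $w=|x-a|^m$ on $\partial D$, which is the boundary condition required by Theorem 6.3; and at the centre $x=a$ one gets directly $w(a)=(1-|a|^2)^{m/2}$, the desired value. Thus the only thing left to check is that this $w$ is convex on $\bar D$.

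For the convexity I would write $w=\ell^{m/2}$ with the affine function $\ell(x)=1-2x\cdot a+|a|^2$. Minimizing $\ell$ over $\bar D$ gives $\min_{|x|\leq1}\ell=1-2|a|+|a|^2=(1-|a|)^2\geq0$, so $\ell>0$ on $\bar D$ for $a$ interior; since $t\mapsto t^{m/2}$ is increasing and convex on $(0,\infty)$ for $m\geq2$, the composition $w$ is convex, as needed. Applying Theorem 6.3 then yields $|M|\geq(1-|a|^2)^{m/2}\omega_m$.

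The main obstacle is precisely this convexity check, and it is where the hypothesis $m\geq2$ enters: for $m=1$ the exponent $1/2$ makes $t^{1/2}$ concave and the argument fails. I would dispose of $m=1$ separately, noting that a minimal $1$-manifold is a straight segment, for which the estimate $|M|\geq 2(1-|a|^2)^{1/2}=\omega_1(1-|a|^2)^{1/2}$ is elementary (a chord of the unit ball through $a$ has length at least $2\sqrt{1-|a|^2}$). The borderline case $m=2$ is in fact the easiest, since then $w=\ell$ is affine and hence trivially convex; this recovers the Alexander--Osserman estimate.
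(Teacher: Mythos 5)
Your proposal is correct and is essentially the paper's own proof: both choose $w=(1+|a|^2-2a\cdot x)^{m/2}$, observe that this agrees with $|x-a|^m$ on the unit sphere and is convex because it is a convex increasing function of an affine (nonnegative) function, and then apply Theorem 6.3 together with the fact that the density of a smooth minimal manifold is $\omega_m$. Your extra remarks on the case $m=1$ and on recovering Alexander--Osserman for $m=2$ are fine but do not change the argument.
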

\begin{proof} We have
  $$
  |x-a|^2= 1+|a|^2 -2a\cdot x=:v
  $$
  on the boundary of the ball. Since $v$ is convex (in fact linear), $w:=v^{m/2}$ is also convex and the corollary follows directly from the theorem.
\end{proof}
\section{ Removable singularities}
The techniques of the previous section can also be used to give a variant of the El Mir-Skoda theorem on extension of positive closed currents (\cite{El Mir}, \cite{Skoda}) in the setting of minimal manifolds or minimal currents. We first define a locally integrable function $u$ to be {\it m-subharmonic} if
$$
d\dhash u\wedge \beta^{m-1}\wedge \alpha
$$
is a positive current for any (weakly) positive form $\alpha$ of bidimension $(m,m)$. (In the terminology of complex analysis this amounts to saying that $d\dhash u\wedge \beta^{m-1}$ be {\it strongly positive}, see \cite{Lelong}.) Then (6.3)  says that $E_{p,\delta}$ is $m$-subharmonic if $0\leq p\leq m-2$, and taking limits when $\delta\to 0$ the same thing holds for $E_p$. Therefore, any potential
\be
u(x):=\int \frac{-1}{|x-y|^{m-2}}d\mu(y)
\ee
for $m>2$ and
$$
u(x):=\int\log |x-y|d\mu(y)
$$
when $m=2$ is also $m$-subharmonic, if $\mu$ is a positive measure.

Kernels like $E_{m-2}$ on $\R^n$ are called Riesz kernels, and have a well developed potential theory, following the lines of the more classical potential theory for the Newtonian kernel $E_{n-2}$, see \cite{Landkof}. Thus, we have a notion of capacity $C_{m-2}$ associated to $E_{m-2}$ and any set of sigma-finite $(m-2)$-dimensional Hausdorff measure has capacity zero. 

We also say that a  set $F$ is $m$-polar is there is an $m$-subharmonic function which is equal to $-\infty$ on $F$ (and maybe elsewhere as well).
By  \cite{Landkof}, any compact set $K$ with $C_{m-2}(K)=0$ is $m$-polar, and moreover there is a measure $\mu$ supported on $K$ whose potential equals $-\infty$ precisely on $K$. Therefore we get
\begin{prop} Any compact  set $K$ of $\sigma$-finite $(m-2)$-dimensional Hausdorff measure is  $m$-polar and there is a potential $u$ of a measure supported on $K$ which equals $-\infty$ on $K$.
\end{prop}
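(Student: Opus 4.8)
The statement packages together two classical facts about the Riesz kernel $E_{m-2}$, and the plan is to reduce everything to the construction of a single measure. First I would observe that the second assertion implies the first: if $\mu$ is a positive measure supported on $K$ whose potential $u(x)=\int E_{m-2}(x-y)\,d\mu(y)$ equals $-\infty$ at every point of $K$, then by the discussion preceding (7.1) such a $u$ is $m$-subharmonic, so $K$ is $m$-polar directly from the definition. Hence it suffices to produce the measure $\mu$, and the proof splits into showing $C_{m-2}(K)=0$ and then building $\mu$ out of this vanishing of capacity.

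For the first stage I would show $C_{m-2}(K)=0$. Since the Riesz capacity $C_{m-2}$ is countably subadditive and a $\sigma$-finite set is a countable union of sets of finite $\mathcal{H}^{m-2}$-measure, it is enough to treat a compact $K$ with $\mathcal{H}^{m-2}(K)<\infty$. For such a set one invokes the standard comparison between Hausdorff measure and Riesz capacity at the critical exponent: a nonzero positive measure of finite $(m-2)$-energy $\int\int |x-y|^{-(m-2)}\,d\mu(x)\,d\mu(y)<\infty$ cannot be carried by a set of finite $(m-2)$-dimensional Hausdorff measure, so no nontrivial measure on $K$ has finite energy and $C_{m-2}(K)=0$. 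This is precisely the inequality recorded without proof in the text and established in \cite{Landkof}.

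For the second stage I would carry out an Evans--Selberg type construction. Choose a decreasing sequence of compact neighborhoods $K_j\downarrow K$; by continuity of Riesz capacity from above along decreasing sequences of compact sets, $\lim_j C_{m-2}(K_j)=C_{m-2}(K)=0$. Let $\nu_j$ be the equilibrium probability measure of $K_j$, with Robin constant $V_j$, so that its potential $U^{\nu_j}(x)=\int |x-y|^{-(m-2)}\,d\nu_j(y)$ satisfies $U^{\nu_j}\ge V_j$ quasi-everywhere on $K_j\supset K$, and $V_j\to\infty$ since $V_j$ is reciprocal to $C_{m-2}(K_j)$. Passing to a subsequence along which $V_j\to\infty$ rapidly and choosing weights $c_j>0$ with $\sum_j c_j=1$ and $\sum_j c_jV_j=\infty$, set $\mu=\sum_j c_j\nu_j$. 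Then $U^\mu=\sum_j c_jU^{\nu_j}=+\infty$ quasi-everywhere on $K$, so $u=-U^\mu$ is $m$-subharmonic and equal to $-\infty$ quasi-everywhere on $K$. For $m=2$ the same scheme runs with the logarithmic kernel $\log|x-y|$ in place of the Riesz kernel.

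The main obstacle is the passage from \emph{quasi-everywhere on $K$} to \emph{everywhere on $K$}: the capacitary potentials $U^{\nu_j}$ are controlled only off exceptional polar sets, so the naive sum gives $u=-\infty$ merely outside a polar set, whereas the proposition demands $u=-\infty$ at every point of $K$. Upgrading this requires using that $K$ itself is polar so that the exceptional set can be absorbed by iterating the construction and re-summing, together with the lower semicontinuity of $U^\mu$. This delicate bookkeeping is exactly the content of the Evans-type theorem in \cite{Landkof}, which I would cite for the final step rather than reproduce in full.
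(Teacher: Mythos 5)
Your overall route is the same as the paper's, which gives essentially no proof beyond the surrounding text: it records that potentials of positive measures are $m$-subharmonic (the discussion around (7.1)), quotes from \cite{Landkof} that $\sigma$-finite $(m-2)$-dimensional Hausdorff measure forces $C_{m-2}(K)=0$, and then quotes the Evans-type theorem from \cite{Landkof} for the existence of a measure on $K$ with potential $-\infty$ precisely on $K$. Your reduction of polarity to the second assertion, and your capacity-zero step via countable subadditivity plus the finite-energy/finite-Hausdorff-measure incompatibility, match this exactly and are fine.

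The one substantive caveat concerns the Evans construction you sketch. The measure $\mu=\sum_j c_j\nu_j$, with $\nu_j$ the equilibrium measure of a compact neighbourhood $K_j\supset K$, is carried by $K_1$, not by $K$; so as written your construction does not deliver the clause ``a measure supported on $K$'', which is not cosmetic here --- Theorem 7.2 uses that $u$ is smooth off $K$, and that requires $\supp\mu\subset K$. The obstacle you flag (upgrading ``quasi-everywhere on $K$'' to ``everywhere on $K$'') is real, but it is not the only one: one must also push the mass onto $K$ itself, e.g.\ by taking weak limits of the $\nu_j$ (which do concentrate on $K$ since $K_j\downarrow K$) or by balayage, and then recover the divergence of the potential on all of $K$. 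Since you ultimately cite the Evans theorem in \cite{Landkof} for the final step --- exactly as the paper does --- the proof stands, but the ``delicate bookkeeping'' you defer includes the support issue as well as the exceptional-set issue.
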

To illustrate this, notice that it is   immediate that a discrete set of points is 2-polar, and that a submanifold of dimension $(m-2)$ is $m$-polar in general. Indeed, it suffices to take $\mu$ equal to surface measure.
\begin{thm} Let $T$ be a minimal supercurrent of bidimension $(m,m)$, defined in $B\setminus K$, where $B$ is a ball and $K$  and is compact in $\R^n$ with sigma-finite $(m-2)$-dimensional Hausdorff measure. Assume that $T$ has finite mass
  $$
  \int_{B\setminus K} T\wedge \beta^m <\infty.
  $$
  Then the trivial extension of $T$, $\tilde T:=\chi_{B\setminus K} T$ , is a  minimal supercurrent in $B$.
\end{thm}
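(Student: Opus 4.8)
The plan is to reduce the statement to an El Mir--Skoda type extension result for the positive \emph{closed} current $S:=T\wedge\beta^{m-1}/(m-1)!$, and then run Lelong's/Skoda's cutoff argument adapted to the superformalism. Since $T\geq0$ and $T\wedge\beta^m$ has finite mass, $T$ itself has finite mass, so $\tilde T=\chi_{B\setminus K}T$ is a bona fide supercurrent of order zero, clearly positive and symmetric. Because wedging with the smooth closed form $\beta^{m-1}$ commutes with trivial extension, $\tilde T\wedge\beta^{m-1}=(m-1)!\,\tilde S$ with $\tilde S:=\chi_{B\setminus K}S$, and $d\tilde T\wedge\beta^{m-1}=d(\tilde T\wedge\beta^{m-1})$ since $d\beta=0$. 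Hence $\tilde T$ is minimal iff $d\tilde S=0$. Now $S$ is positive (Proposition 2.1 applied $m-1$ times), it is $d$-closed on $B\setminus K$ (this is exactly the minimality of $T$), and its trace $S\wedge\beta=T\wedge\beta^m$ has finite mass. So everything reduces to: the trivial extension across $K$ of a positive, $d$-closed, finite-mass supercurrent of bidimension $(1,1)$ is again closed.

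By Proposition 7.1, since $K$ has $\sigma$-finite $(m-2)$-Hausdorff measure, there is an $m$-subharmonic potential $u$ (a Riesz potential of a measure carried by $K$) with $u=-\infty$ on $K$; normalize $u\le 0$. By the definition of $m$-subharmonicity, $d\dhash u\wedge S=d\dhash u\wedge\beta^{m-1}\wedge T/(m-1)!\geq0$ as a measure on $B\setminus K$. Choose smooth $\theta_R:\R\to[0,1]$ with $\theta_R=0$ on $(-\infty,-2R]$, $\theta_R=1$ on $[-R,\infty)$ and $\abs{\theta_R'}\le 2/R$, and set $\chi_R:=\theta_R(u)$, so $\chi_R=0$ near $K$ and $\chi_R\uparrow1$ off $K$. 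Since $u$ is upper semicontinuous and $=-\infty$ on $K$, the transition set $\{-2R\le u\le -R\}$ is a compact subset of $B\setminus K$, where $S$ is closed. For a test superform $\phi$ of bidegree $(0,1)$, finite mass gives
$$
d\tilde S.\phi=\lim_{R\to\infty}\int\chi_R\,S\wedge d\phi=-\lim_{R\to\infty}\int S\wedge d\chi_R\wedge\phi,
$$
the last equality because $\chi_R\phi$ is supported in $B\setminus K$ and $S$ is closed there.

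The crux is the energy estimate for the remaining boundary term. Writing $d\chi_R=\theta_R'(u)\,du$ and using the Cauchy--Schwarz inequality for the positive current $S$,
$$
\Bigl|\int S\wedge d\chi_R\wedge\phi\Bigr|^2\le \Bigl(\int\theta_R'(u)^2\,du\wedge\dhash u\wedge S\Bigr)\Bigl(C\norm{\phi}^2\!\!\int_{\supp d\chi_R}\!\!S\wedge\beta\Bigr).
$$
The second factor is bounded by the finite mass of $S$, and along a dyadic sequence $R_k=2^k$ the transition annuli $\{-2^{k+1}\le u\le-2^k\}$ are disjoint, so the mass of $S$ on them tends to $0$. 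For the first (energy) factor I use a Chern--Levine--Nirenberg bound: on a fixed $B'\Subset B$ the truncation $u_R:=\max(u,-2R)$ is $m$-subharmonic, bounded in $[-2R,0]$, equals $u$ where $u>-2R$, and is locally constant near $K$; integrating $d\dhash(u_R^2)=2\,du_R\wedge\dhash u_R+2u_R\,d\dhash u_R$ by parts against the closed current $S$ (with a cutoff supported in $B\setminus K$, using $d\dhash u_R\wedge S\ge0$ to control the sign of the error) gives $\int\theta_R'(u)^2\,du\wedge\dhash u\wedge S\le (C/R^2)\int_{\{u>-2R\}\cap \bar B'}du_R\wedge\dhash u_R\wedge S\le (C/R^2)\cdot R^2\norm{S}=O(1)$. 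Thus along $R_k$ the energy stays bounded while the mass factor vanishes, forcing $\int S\wedge d\chi_{R_k}\wedge\phi\to0$ and hence $d\tilde S=0$. The main obstacle is precisely this energy estimate: one must tune the cutoff so that the unavoidable error term remains under control, which works only because $u$ is $m$-subharmonic (so the Chern--Levine--Nirenberg integration by parts has a definite sign) and because the transition regions are compactly contained in $B\setminus K$, where $S$ is genuinely closed.
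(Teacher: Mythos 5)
Your proposal is correct and follows essentially the same route as the paper: both reduce the statement to the closedness of the trivial extension of $S=T\wedge\beta^{m-1}$, invoke Proposition 7.1 to get an $m$-subharmonic potential $u$ equal to $-\infty$ on $K$, build cutoff functions from $u$ that vanish near $K$, prove a uniform energy bound $\int du_k\wedge\dhash u_k\wedge S\leq C$ by integrating by parts against the closed current $S$ (using $m$-subharmonicity for the sign of the error term), and finish by Cauchy--Schwarz against the mass of $S$ on the transition region, which tends to zero by the finite-mass hypothesis. The only differences are implementational: the paper takes bounded $m$-subharmonic cutoffs $u_k=\chi(u/k)$ with $\chi$ convex and gets the energy bound in one step from $d\dhash(u_k^2)$, while you cut along level sets of $u$ itself, obtain the energy bound by a Chern--Levine--Nirenberg estimate on the truncations $\max(u,-2R)$, and make the mass factor vanish via disjoint dyadic annuli rather than via the locally uniform convergence $u_k\to 1$ off $K$.
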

\begin{proof}
  We follow almost verbatim the proof of Proposition 11.1 in \cite{B-Sibony}. 
  Let $u$ be a potential as in (4.7) with $\mu$ supported on $K$, which equals $-\infty$ on $K$. Thus, $u$ is smooth outside $K$ and $m$-subharmonic. Let $\chi(t)$ be an increasing continuous convex function, defined when $t\leq 0$, with $\chi(0)=1$ and $\chi(t)=0$ for $t\leq -1$. Put
  $$
  u_k=\chi(u/k)
  $$
  for $k=1,2, ...$. Then $u_k$ are smooth, $m$-subharmonic, $0\leq u_k < 1$, and $u_k$ tend to 1 uniformly on compacts outside $K$. 

  Let $\theta$ be a smooth function with compact support in $B$. Then, by integration by parts,
  $$
  \int \theta T\wedge\beta^{m-1}\wedge d\dhash u_k^2\leq C
  $$
  where $C$ is a fixed constant independent of $k$. This implies that 
  $$
  \int\theta T\wedge\beta^{m-1}\wedge du_k\wedge \dhash u_k \leq C,
  $$
  since $u_k$ is $m$-subharmonic.

  Let $p(x)$ be a smooth function on $\R$, such that $p(x)=1$ if $x>1/2$ and $p(x)=0$ if $x<1/3$, and put $\chi_k=p(u_k)$. Then $\chi_k$ tends to $\chi_{B\setminus K}$ and we get if $\psi$ is a testform
$$
  \int d\left(\chi_{B\setminus K} T\wedge\beta^{m-1}\right)\wedge \psi =\lim_{k\to\infty} \int d\left(\chi_k T\wedge\beta^{m-1}\right)\wedge \psi =\lim_{k\to\infty}\int p'(u_k) d u_k\wedge T\wedge\beta^{m-1}\wedge \psi. 
$$
 By our estimate  on $d u_k$ this limit is zero.  Therefore $\chi_{B\setminus K}T\wedge\beta^{m-1}$ is $d$-closed, so we are done.   
\end{proof}

One consequence of this is that Corollary 6.4 also holds for minimal manifolds with singularities. More precisely,
by the theorem, the extension $\tilde T$ has a density also at points in $K$. It is easy to see, by the monotonicity theorem,  that
$$
\gamma_{\tilde T}(a)\geq \limsup_{b\to a}\gamma_{\tilde T}(b).
$$ 
If $T$ is associated  with  a minimal manifold of dimension $m$ that has singularities at the set $E$  the density is $\omega_m$ (the volume of an $m$-dimensional unit ball) at all the regular points. Therefore it is at least $\omega_m$ at the singular points. 

(A natural question in this context seems to be if minimal manifolds can be characterized within the class of minimal supercurrents by conditions on the density. For instance, if $T$ is minimal and the density is constant on the support of $T$, is $T$ then $c[M]_s$ for a smooth minimal manifold $M$ ?)

We also note that the assumption that $w$ be convex in Theorem 4.5 can be relaxed to $m$-subharmonic, since we just need that
$$
d\dhash w\wedge S=d\dhash w\wedge T\wedge \beta^{m-1}
$$
be nonnegative.

\section{Variation of volume and the mean curvature flow}

We have seen in formula (5.1), that if $M$ is a smooth manifold of dimension $m$, then
\be
d([M]_s\wedge\beta^{m-1}/(m-1)!)=\sum H_j n_j^{\#}\rfloor ([M]_s\wedge\beta^m/m!),
\ee
where $H_j$ are the components of the mean curvature vector.
In this section we will use the notation $\beta_m=\beta^m/m!$,
$$
S=[M]_s\wedge\beta_{m-1},
$$
and
$$
\sigma=[M]_s\wedge\beta_m.
$$
Then formula (8.1) says that
\be
dS=H^{\#}\rfloor \sigma.
\ee
Since $S$ is $J$-invariant and of even degree it follows after a small computation that
\be
\dhash S= JdS=-H\rfloor\sigma.
\ee

Our main objective in this section is to give a formula for the derivative of the volume form
$$
\sigma_M:=[M]_s\wedge \beta^m/m!
$$
when $M$ moves under {\it the mean curvature flow},  see \cite{2Colding-Min}, \cite{Smoczyk}. As in these references,  we say that a family $M_t$ of smooth submanifolds of $\R^n$  moves by the  mean curvature flow if there is a vector field $\vec{H}$ on the ambient space such that $F_t(M_0)=M_t$, where $F_t$ is the flow of $-\vec{H}$,  which restricts to the mean curvature vector field of $M_t$ on each $M_t$.

  Again we point out that we use here the same conventions about the sign of the mean curvature vector as in \cite{2Colding-Min}, which seems to be the opposite of the one in \cite{Smoczyk}. To make matters worse, the condition
$$
F_t(M_0)= M_t,
$$
means in terms of pullbacks of  currents that
$$
F_{-t}^*([M_0])=[M_t],
$$
so since we will work with pullbacks, we are dealing with the flow $G_t$ of the vector field $+\vec{H}$ after all. Recalling Cartan's formula for the Lie derivative (cf section 2) we get
\be
\frac{d}{dt}|_{t=0}[M_t]=L_{\vec{H}} [M]=d \vec{H}\rfloor [M]=d H\rfloor [M],
\ee
since $[M]$ is closed.

We first note that the next proposition follows directly from (8.3) and Cartan's formula.

\begin{prop}
  $$
  \frac{d F_{-t}^*(\sigma)}{dt}|_{t=0}=-d\dhash S.
  $$
\end{prop}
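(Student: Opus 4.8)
The plan is to evaluate the left-hand side by Cartan's formula for the Lie derivative, exactly in the spirit of the derivation of (8.4). First I would unwind the sign conventions: since $F_t$ is the flow of $-\vec H$, the family $F_{-t}$ is the flow $G_t$ of $+\vec H$ at time $t$ (the reversal in the subscript cancels the reversal of the vector field), so that
$$
\frac{d F_{-t}^*(\sigma)}{dt}\Big|_{t=0}=L_{\vec H}\sigma.
$$
Cartan's formula (2.8), which applies to the smooth supercurrent $\sigma$ just as it was applied to $[M]$ in (8.4), then gives
$$
L_{\vec H}\sigma=d(\vec H\rfloor\sigma)+\vec H\rfloor d\sigma.
$$

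Next I would dispose of the two terms separately. The second vanishes for degree reasons: $\sigma=[M]_s\wedge\beta_m$ is of maximal bidegree $(n,n)$, so $d\sigma=0$ and hence $\vec H\rfloor d\sigma=0$. For the first term I would use that contraction with the field $\vec H$ agrees with contraction with the associated $(1,0)$-form $H=\sum H_j\,dx_j$, together with (8.3), which reads $\dhash S=-H\rfloor\sigma$. Thus $\vec H\rfloor\sigma=H\rfloor\sigma=-\dhash S$, and applying $d$ yields
$$
\frac{d F_{-t}^*(\sigma)}{dt}\Big|_{t=0}=d(\vec H\rfloor\sigma)=-d\dhash S,
$$
which is the asserted identity.

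Since both ingredients — Cartan's formula and (8.3) — are already in hand, the argument is essentially bookkeeping, and the main point to watch is the sign: one must correctly identify $F_{-t}$ with the flow of $+\vec H$ (so no spurious minus sign survives in front of the Lie derivative) and invoke (8.3) with the orientation convention for $H$ fixed consistently. The only genuinely analytic observation is the triviality $d\sigma=0$, which holds because $\sigma$ is of top bidegree $(n,n)$; everything else reduces to the linear-algebraic identity $\vec H\rfloor\sigma=-\dhash S$.
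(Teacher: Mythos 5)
Your argument is correct and is exactly the proof the paper intends when it says the proposition ``follows directly from (8.3) and Cartan's formula'': you identify $F_{-t}$ with the flow of $+\vec H$, apply Cartan's formula, discard $\vec H\rfloor d\sigma$ because $\sigma$ has top bidegree $(n,n)$, and substitute $\vec H\rfloor\sigma=H\rfloor\sigma=-\dhash S$ from (8.3). The sign bookkeeping you flag is handled the same way in the paper's discussion preceding (8.4), so nothing is missing.
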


We now let $\sigma_t=[M_t]_s\wedge\beta_m$; the volume form of $M_t$. (Note that this is not the same thing as $F_{-t}^*(\sigma)$ which has total mass independent of $t$, whereas the mass of $M_t$ changes.) The next theorem is the main result of this section. 
 
  \begin{thm}  

 Let $M_t$ be a family of smooth $m$-dimensional  submanifolds of $\R^n$, moving under the mean curvature flow, with $M_0=M$. Let
  $$
  \sigma_t=\sigma_{M_t}=[M_t]_s\wedge \beta_m
  $$
  be their volume forms. Then
  \be
  \frac{d\sigma_t}{dt}|_{t=0}=-|\vec{H}|^2\sigma -d\dhash S.
  \ee
\end{thm}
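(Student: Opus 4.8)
The quantity to differentiate is the volume form $\sigma_t = [M_t]_s \wedge \beta_m$, which is genuinely time-dependent in mass, so the plan is to relate it to the pullback $F_{-t}^*(\sigma)$ whose total behavior is already governed by Proposition 8.1. First I would compute how $[M_t]_s$ evolves. Since $[M_t] = F_{-t}^*([M])$ by the conventions fixed above, Cartan's formula (2.8) gives $\frac{d}{dt}|_{t=0}[M_t] = L_{\vec H}[M] = dH\rfloor[M]$, exactly as in (8.4). The subtlety is that $[M_t]_s = [M_t]\wedge n_t^{\#}$ involves the normal form $n_t$, which also moves with the flow, so $\sigma_t$ is not simply $F_{-t}^*(\sigma)$: pulling back $\sigma$ transports the full superform (including the $d\xi$-part attached to the ambient embedding), whereas $\sigma_t$ is built freshly from the instantaneous geometry of $M_t$.

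**Splitting the variation into two pieces.** The strategy is to write $\frac{d}{dt}\sigma_t = \frac{d}{dt}F_{-t}^*(\sigma) + \left(\frac{d}{dt}\sigma_t - \frac{d}{dt}F_{-t}^*(\sigma)\right)$. The first term is $-d\dhash S$ by Proposition 8.1, which already produces one of the two terms in (8.5). So the whole content of the theorem reduces to showing that the discrepancy term equals $-|\vec H|^2\sigma$. This discrepancy measures the failure of the pullback to capture the change in the transverse (normal) data of $M_t$: geometrically, $F_{-t}^*(\sigma)$ has mass independent of $t$ because pullback preserves the superintegral, while the actual area $\int\sigma_t$ shrinks at the minimal-surface rate $-\int|\vec H|^2$. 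I would make this precise by differentiating $n_t^{\#}$ along the flow and tracking how the mean curvature vector $\vec H = \sum H_j\vec n_j$ enters.

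**The key computation.** The heart of the argument is to show that the infinitesimal rotation of the normal frame $n_t$ under the flow of $\vec H$ contributes precisely $-|\vec H|^2\sigma$. Concretely, using $F_j = dn_j^{\#}$ and the trace identity $tr'(F_j) = H_j$ established before Corollary 5.2, together with the relations (8.2) and (8.3) for $dS$ and $\dhash S$, I expect the discrepancy to collapse to $-\sum_j H_j\,(n_j\rfloor\, H^{\#}\rfloor\sigma)$ or an equivalent contraction, which by $|\vec H|^2 = \sum_j H_j^2$ and the positivity/orthonormality of the $n_j$ evaluates to $-|\vec H|^2\sigma$. The main obstacle is exactly this bookkeeping: carefully differentiating the locally-defined normal forms $n_j$ along the flow, and verifying that the contraction terms $H^{\#}\rfloor$ and $H\rfloor$ appearing in (8.2)–(8.3) combine (via Lemma 4.1, i.e. $n_j\rfloor F_j = 0$ on $M_s$) to give the scalar $|\vec H|^2$ rather than some off-diagonal cross terms. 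I anticipate that checking the signs — given the author's explicit warnings about sign conventions for $\vec H$ and the direction of the flow $G_t$ versus $F_t$ — will be the most error-prone step, and I would verify it on the model case of a round sphere shrinking under mean curvature flow, where both $-|\vec H|^2\sigma$ and $-d\dhash S$ can be computed by hand.
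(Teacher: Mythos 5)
Your splitting of the derivative is in fact exactly the paper's: the first term in the paper's computation is $c_p\,d\left((H\rfloor[M_t])\wedge n^{\#}(t)\wedge\beta_m\right)=d(H\rfloor\sigma)=L_{\vec H}\sigma=\frac{d}{dt}F_{-t}^*(\sigma)|_{t=0}=-d\dhash S$, so the entire content is, as you say, to identify the discrepancy $\frac{d}{dt}\sigma_t-L_{\vec H}\sigma$ with $-|\vec H|^2\sigma$; and the ingredients you list ($F_j=dn_j^{\#}$, the trace identity $F_j\wedge[M]_s\wedge\beta_{m-1}=H_j\,[M]_s\wedge\beta_m$, Lemma 4.1) are the right ones. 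The gap is that this key computation is only announced, not performed, and the sketch you give of it points at the wrong term. The ``infinitesimal rotation of the normal frame under the flow'' is precisely the contribution that \emph{vanishes}: choosing the frame $n_j(t)$ orthonormal in a full neighbourhood forces $\dot n\perp n$, so the term $[M_t]\wedge\dot n^{\#}\wedge\beta_m$ coming from differentiating $n^{\#}(t)$ in $t$ drops out. The $-|\vec H|^2\sigma$ term arises instead from the \emph{spatial} derivative $dn^{\#}$, i.e.\ the second fundamental form, when one commutes $d$ past $n^{\#}$ in $c_p\,d(H\rfloor[M_t])\wedge n^{\#}\wedge\beta_m$ by the Leibniz rule; one then writes $dn^{\#}=\sum n_j^{\#}\rfloor F_j\wedge n^{\#}$ (Lemma 4.1), uses $n_j^{\#}\rfloor\beta=n_j$ to move a $\beta$-factor, and applies the trace identity to collapse $(H\rfloor[M])\wedge dn^{\#}\wedge\beta_m$ to $-\sum_j H_j^2\,\sigma$.

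A second concrete problem: the one explicit formula you offer for the discrepancy, $-\sum_j H_j\,(n_j\rfloor H^{\#}\rfloor\sigma)$, has bidegree $(n-1,n-1)$ (two contractions applied to the $(n,n)$-current $\sigma$) and therefore cannot equal the $(n,n)$-current $-|\vec H|^2\sigma$; the correct expression must contain a wedge with the curvature forms $F_j$ to restore the degree, namely $-\sum_j H_j\,F_j\wedge[M]_s\wedge\beta_{m-1}$. Your plan to check signs on the shrinking sphere is sensible, but as written the proposal does not yet contain the argument that actually produces the $-|\vec H|^2$ term.
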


  To prove the theorem we need a small preparation. First recall that locally on $M$ we can find an orthonormal basis for the (co)normal bundle, $(1,0)$-forms $n_j$, $j=1,...p$. We now claim that we can find such forms $n_j(t)$ that depend smoothly on $t$,  form an orthonormal basis for the (co)normal bundle of $M_t$  on $M_t$ and moreover are orthonormal everywhere in a small neighbourhood. Indeed, such forms satisfying the first two conditions are constructed as in section 4 (when we were dealing with a fixed manifold). We then just apply the Gram-Schmidt procedure (which changes nothing on $M_t$) to make them orthonormal everywhere in a small neighbourhood. 

  This means that in particular   the $p$-forms $n(t):=n_1(t)\wedge... n_p(t)$ have norm one, so that $\dot n$ is perpendicular to $n$. Recalling that
  $$
  [M_t]_s= c_p[M_t]\wedge n^{\#}(t)
  $$
  and using (8.4) we then get
  $$
  \frac{d}{dt}[M_t]_s= c_p(d(H\rfloor[M_t])\wedge n^{\#}(t)+[M_t]\wedge \dot n^{\#}(t)).
$$
  Wedging this with $\beta_m$ the last term disappears since $\dot n$ is perpendicular to $n$. Thus
  $$
  \frac{d\sigma_t}{dt}|_{t=0}= c_p d(H\rfloor[M_t])\wedge n^{\#}(t)\wedge\beta_m=
   c_p d((H\rfloor[M_t])\wedge n^{\#}(t)\wedge\beta_m)+(-1)^{p-1}c_p((H\rfloor[M])\wedge dn^{\#}\wedge\beta_m.
   $$
   The first term on the right hand side equals
   $$
   c_p d((H\rfloor[M_t])\wedge n^{\#}(t)\wedge\beta_m)=d(H\rfloor\sigma)=
   -d\dhash S,
$$
   by (8.3). For the second term we use that
   $$
   dn^{\#}=\sum F_j\wedge n_j^{\#}\rfloor n^{\#}= \sum n_j^{\#}\rfloor F_j\wedge n^{\#}
   $$
   by Lemma 4.1.
   Using this and that $n_j^{\#}\rfloor\beta=n_j$ we get that
   $$
   dn^{\#}\wedge\beta_m=-\sum n_j\wedge F_j\wedge n^{\#}\wedge \beta_{m-1}.
   $$
Now we plug this into the second term and get   
$$
(-1)^{p-1}c_p((H\rfloor[M])\wedge dn^{\#}\wedge\beta_m=
-c_p(\sum n_j\wedge(H\rfloor[M])\wedge n^{\#}\wedge F_j\wedge \beta_{m-1}
   $$
which equals
$$
-\sum H_j[M]_s\wedge F_j\wedge \beta_{m-1}=-|\vec{H}|^2\sigma.
$$
This completes the proof of the theorem. \qed

Integrating (8.5) against a smooth function $\rho$, and using (2.4),  we obtain

\begin{cor}
  Let $M_t$ be a family of compact manifolds of dimension $m$, moving under the mean curvature flow. Then, if $\rho$ is a sufficiently smooth function
  $$
  (d/dt)|_{t=0}\int \rho d\sigma_t= -\int \rho |\vec{H}|^2 d\sigma -\int d\dhash\rho\wedge S.
    $$
    As a consequence, if $\rho$ is convex (or, more generally, $m$-subharmonic), then
    $$
    \int \rho d\sigma_t
    $$
    is decreasing. 
    \end{cor}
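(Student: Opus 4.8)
The plan is to obtain the displayed formula by simply pairing the identity (8.5) of Theorem 8.1 with the smooth function $\rho$, and then to read off monotonicity from the signs of the two resulting terms. Since the manifolds $M_t$ are compact and move smoothly in $t$, the volume forms $\sigma_t=[M_t]_s\wedge\beta_m$ are compactly supported measures depending smoothly on $t$, so differentiation under the integral sign is legitimate and gives
$$
\frac{d}{dt}\Big|_{t=0}\int \rho\, d\sigma_t=\int \rho\,\frac{d\sigma_t}{dt}\Big|_{t=0}.
$$
Substituting (8.5), namely $\frac{d\sigma_t}{dt}\big|_{t=0}=-|\vec H|^2\sigma-d\dhash S$, splits the right-hand side into the curvature term $-\int\rho\,|\vec H|^2\,d\sigma$ and the term $-\int\rho\, d\dhash S$.

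Next I would dispose of the second term by integration by parts. Because $S=[M]_s\wedge\beta_{m-1}$ is compactly supported (as $M$ is compact), the identity (2.4) applies and yields $\int \rho\, d\dhash S=\int d\dhash\rho\wedge S$. Combining the two terms produces exactly the asserted formula
$$
\frac{d}{dt}\Big|_{t=0}\int \rho\, d\sigma_t=-\int \rho\,|\vec H|^2\, d\sigma-\int d\dhash\rho\wedge S.
$$
This is the substantive content of the first part; granting Theorem 8.1, everything here is bookkeeping, the only genuine step being the validity of the integration by parts, which is guaranteed by compactness.

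For the monotonicity consequence I would show that both terms on the right are nonpositive. The second term is controlled by the hypothesis on $\rho$: writing $S=[M]_s\wedge\beta^{m-1}/(m-1)!$ and recalling that $[M]_s$ is a positive symmetric supercurrent of bidimension $(m,m)$, the $m$-subharmonicity of $\rho$ means precisely that $d\dhash\rho\wedge\beta^{m-1}\wedge[M]_s\geq 0$ (by Proposition 2.1, together with approximation for currents, convexity is the special case of this), whence $\int d\dhash\rho\wedge S\geq0$ and $-\int d\dhash\rho\wedge S\leq0$; if $\rho$ is merely convex or $m$-subharmonic but not smooth, one first regularizes by convolution and passes to the limit. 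The curvature term $-\int\rho\,|\vec H|^2\,d\sigma$ is where the only real subtlety lies: since $|\vec H|^2\geq0$ and $d\sigma$ is a positive measure, this term is nonpositive exactly when $\rho\geq0$. The main point I would flag, therefore, is this sign constraint — the argument delivers monotonicity for \emph{nonnegative} $m$-subharmonic (in particular nonnegative convex) weights, the nonnegativity being what makes the curvature term cooperate while $m$-subharmonicity handles the $d\dhash$ term — and I would state the positivity hypothesis on $\rho$ explicitly rather than leave it tacit.
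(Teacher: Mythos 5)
Your proof matches the paper's: the paper's entire argument is to integrate (8.5) against $\rho$ and apply the integration-by-parts identity (2.4), exactly as you do. Your flag about the sign of the curvature term is also well taken --- as literally stated the monotonicity consequence fails for, say, $\rho\equiv -1$ (which is convex), since then $\int\rho\,d\sigma_t=-|M_t|$ is increasing whenever $\vec{H}\not\equiv 0$; the paper tacitly uses nonnegative weights in its application (a convex barrier function that is large outside a convex set), and you are right that the hypothesis $\rho\geq 0$ should be made explicit.
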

It follows from the last part that if $M$ lies in a convex open set, then $M_t$ will remain there, since we may choose $\rho$ to be close to infinity outside the convex set.

Formula (8.5) reflects the fact that the mean curvature flow is described by a (non linear) parabolic equation. It says that the volume forms then flow by a (linear) parabolic equation. It seems to be a natural question if there is a corresponding equation for the supercurrents $[M_t]_s$ themselves, i. e. without taking traces as we have done here. Since the minimal surface equation  becomes linear in the supercurrents formalism, this is perhaps not complete unrealistic.

\section{General submanifolds and their Riemannian geometry}
In this section we shall describe  the Levi-Civita connection and Riemannian curvature of a submanifold $M$ of $\R^n$ in terms of the 'superstructure'.

Let $M$ be a smooth submanifold of $\R^n$ and let $[M]_s$ be its associated supercurrent. With the notation from  section 3 we have
$$
d[M]_s= \F[M]_s, \quad \dhash [M]_s= -\F^{\#} [M]_s.
$$
Motivated by these formulas we introduce the operators
$$
D:=d-\F, \quad D^{\#}:\dhash +\F^{\#}
$$
and get
$$
D[M]_s=D^{\#} [M]_s=0.
$$
Notice that $D$ and $D^{\#}$ are antiderivations, so they  satisfy the same rules of computation as $d$ and $\dhash$. If $a$ is a superform on the ambient space we get
$$
D (a\wedge[M]_s)= (Da)\wedge [M]_s, \quad D^{\#}(a\wedge [M]_s)=(D^{\#}a) \wedge [M]_s.
$$
In particular, if $a$ vanishes on $[M]_s$, i.e. $a\wedge [M]_s=0$, then $Da$ and $D^{\#} a$ also vanish on $[M]_s$,  which means that $D$ and $D^{\#}$ are well  defined as operators on forms on $[M]_s$.

Since $\F$ involves contraction with $n_j^{\#}$, it vanishes on any form of bidegree $(p,0)$. Hence $D=d$ on such forms and we have just retrieved the fact that the exterior derivative is well defined on submanifolds. On the other hand, $\F^{\#}$ involves contraction with $n_j$ and does not vanish on forms of bidegree $(p,0)$, so $\dhash$ and $D^{\#}$ are different on $(p,0)$ forms. In fact, $D^{\#} a$ can be thought of as the Levi-Civita connection acting on $a$. Indeed, $D^{\#} a$ is a form of bidegree $(p,1)$, so it can be seen as a 1-form in $\xi$ with values in the space of $(p,0)$-forms in $x$. If $V=\sum V_j \partial/\partial x_j$ is a tangent vector to $M$, the derivative of $a$ in the direction $V$ is
$$
V^{\#}\rfloor D^{\#} a,
$$
where $V^{\#}=\sum V_j\partial/\partial \xi_j$. It is not hard to verify that this is the Levi-Civita connection. 

To compute its curvature we need to apply $D^{\#}$ twice. This is done in the following proposition which is just a way of writing Gau\ss's formula for the curvature  in supernotation (see also Weyl, \cite{Weyl}). 
\begin{prop} The curvature tensor $R$ on $M$ is given by the symmetric $(2,2)$ form
  $$
  R=(1/2)\sum F_j\wedge F_j,
  $$
  in the sense that, if $a$ is of bidegree $(1,0)$
  $$
  (D^{\#})^2 a= a\rfloor (1/2) F_j\wedge F_j.
  $$
\end{prop}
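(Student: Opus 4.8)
The plan is to expand $D^{\#}=\dhash+\F^{\#}$ and show that, modulo $[M]_s$, a single term survives. First, $(\dhash)^2=0$: applying $\dhash$ twice to a coefficient produces $\partial^2/\partial x_k\partial x_l$ contracted against $d\xi_k\wedge d\xi_l$, symmetric against antisymmetric. Hence $(D^{\#})^2=\dhash\F^{\#}+\F^{\#}\dhash+(\F^{\#})^2$. Rather than grind through all three pieces, I would use that $D^{\#}$ is well defined on forms on $M_s$ and represent the class of $a$ by a \emph{tangential} $(1,0)$-form, i.e. one with $n_j\rfloor a=\langle n_j,a\rangle=0$ on $M$ for every $j$. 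For such a representative $\F^{\#}a=\sum_j(n_j\rfloor a)F_j=0$, so $D^{\#}a=\dhash a$, and a second application gives, on $M_s$,
$$
(D^{\#})^2a=\F^{\#}(\dhash a)=\sum_j F_j\wedge(n_j\rfloor\dhash a).
$$

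The key step is a Leibniz-type identity turning $n_j\rfloor\dhash a$ into $a\rfloor F_j$. Writing $n_j=d\rho_j$ and $F_j=d\dhash\rho_j$ and differentiating $\langle n_j,a\rangle$ in coordinates, one verifies
$$
\dhash(n_j\rfloor a)=a\rfloor F_j-n_j\rfloor\dhash a .
$$
For a tangential representative the function $n_j\rfloor a$ vanishes on $M$; and $\dhash g=J(dg)$ for any function $g$, so if $g$ vanishes on $M$ its differential is conormal, $dg=\sum_k\mu_k n_k$ on $M$, whence $\dhash g=\sum_k\mu_k n_k^{\#}$ vanishes on $M_s$. Applying this to $g=n_j\rfloor a$ gives $n_j\rfloor\dhash a=a\rfloor F_j$ on $M_s$, and substituting yields $(D^{\#})^2a=\sum_j F_j\wedge(a\rfloor F_j)$.

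To conclude I would use that contraction is an antiderivation: since $a\rfloor F_j$ is a $1$-form and $F_j$ a $2$-form they commute, so $a\rfloor(F_j\wedge F_j)=2\,F_j\wedge(a\rfloor F_j)$. Summing over $j$ gives $(D^{\#})^2a=(1/2)\,a\rfloor\sum_j F_j\wedge F_j=a\rfloor R$, which is the assertion.

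The main obstacle is the bookkeeping modulo $[M]_s$. The decisive simplifications are (i) that the connection term $\F^{\#}a$ vanishes for a tangential representative, which lets one bypass the awkward $(\F^{\#})^2$ contribution entirely, and (ii) that $\dhash g$ vanishes on $M_s$ whenever $g$ vanishes on $M$, which promotes the pointwise Leibniz identity to an identity on $M_s$. As a consistency check one should verify frame-independence: Lemma 4.1 gives $n_j\rfloor F_j=0$ on $M_s$, and differentiating the orthonormality relations $\langle n_j,n_k\rangle=\delta_{jk}$ gives the antisymmetry $n_j\rfloor F_k+n_k\rfloor F_j=0$ on $M_s$, so the expression $(1/2)\sum_j F_j\wedge F_j$ does not depend on the chosen orthonormal conormal frame.
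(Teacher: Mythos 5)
Your proposal is correct and follows essentially the same route as the paper: reduce $(D^{\#})^2a$ to $\F^{\#}\dhash a$ by choosing a representative that kills the connection term, convert $n_j\rfloor\dhash a$ into $a\rfloor F_j$ via the Leibniz/commutator identity (the paper's Lemma on $[\dhash,\theta\rfloor]$, specialized to bidegree $(1,0)$), and finish with the antiderivation identity $a\rfloor(F_j\wedge F_j)=2F_j\wedge(a\rfloor F_j)$. The only (harmless) variation is that you take a representative tangential merely along $M$ rather than the paper's canonical extension with $n_j\rfloor a=0$ in a neighbourhood, which you correctly compensate for by observing that $\dhash(n_j\rfloor a)$ is conormal, hence vanishes against $[M]_s$.
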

If $a$ is a form of bidegree $(p,q)$ on $M_s$ we can always extend it to a $(p,q)$-form on the ambient space. It is convenient to choose a special extension. We say that the extension is canonical if $n_j\rfloor a =n_j^{\#}\rfloor a=0$ for $j=1,...p$. It is obvious that canonical extensions exist locally since $n_j=d\rho_j$ are linearly independent and can be completed to a basis for the $(1,0)$ forms. We then just take any extension, write it in terms of the new basis, and throw away the terms that contain some $n_j$ or $n_j^{\#}$. The resulting form coincides with $a$ on $M_s$ in the sense that its wedge product with $[M]_s$ equals $a\wedge [M]_s$.

If $a$ is  a canonical extension of a form on $M_s$, then $\F a=\F^{\#} a=0$, so
$$
D^{\#} a= \dhash a.
$$
Thus
$$
(D^{\#})^2 a =\F^{\#} \dhash a=\sum F_j\wedge n_j\rfloor \dhash a.
$$
To continue, we introduce the notation that if $F=\sum F_{i j}dx_i\wedge d\xi_j$ is a $(1,1)$ form (on $\R^n_s$) and $a$ is $(p,q)$ form, then 
$$
F\cup a =-\sum F_{i j} d\xi_j\wedge dx_i\rfloor a.
$$
Notice that in the particular case when $a$ has bidegree $(1,0)$
$$
F\cup a= -a\rfloor F.
$$

\begin{lma} If $F= d\theta^{\#}$ (where $\theta$ is $(1,0)$), then we have the commutator formula
  $$
  [\dhash, \theta\rfloor]a:= \dhash\theta\rfloor a +\theta\rfloor \dhash a= -F\cup a
  $$
  where $a$ is any superform (canonical or not). 
\end{lma}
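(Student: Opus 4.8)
The plan is to compute the anticommutator $[\dhash,\theta\rfloor]$ directly as an operator on superforms, after writing both constituents in coordinates. If $\theta=\sum_l\theta_l\,dx_l$, then contraction is $\theta\rfloor=\sum_l\theta_l\,(dx_l\rfloor\,\cdot\,)$, an antiderivation of degree $-1$, while by definition $\dhash=\sum_k d\xi_k\wedge\partial_{x_k}$ (differentiate the coefficient, then wedge with $d\xi_k$), an antiderivation of degree $+1$. Since both are graded antiderivations, their anticommutator is automatically a derivation of degree $0$; one checks at once that it kills functions (because $\theta\rfloor f=0$ and $\theta\rfloor d\xi_k=0$), so it is tensorial, and the whole content is the algebraic operator I now extract.

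First I would expand $\dhash(\theta\rfloor a)+\theta\rfloor(\dhash a)$ by Leibniz, noting that $\partial_{x_k}$ commutes with the constant-coefficient algebraic operators $d\xi_k\wedge$ and $dx_l\rfloor$. This produces two families of terms. In the terms where $\partial_{x_k}$ lands on $a$, the relevant combination is
\[
d\xi_k\wedge(dx_l\rfloor\,\cdot\,)+dx_l\rfloor(d\xi_k\wedge\,\cdot\,)=\langle d\xi_k,dx_l\rangle=0,
\]
which vanishes because $E_0\perp E_1$. This cancellation of the first-order part is the heart of the computation and is exactly why the operator is tensorial. What survives is only the term in which $\partial_{x_k}$ differentiates the coefficient $\theta_l$, giving
\[
[\dhash,\theta\rfloor]a=\sum_{k,l}\frac{\partial\theta_l}{\partial x_k}\,d\xi_k\wedge(dx_l\rfloor a).
\]

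To finish I would match this against $-F\cup a$. From $F=d\theta^{\#}=\sum_{k,l}\frac{\partial\theta_l}{\partial x_k}\,dx_k\wedge d\xi_l$ one reads off $F_{ij}=\partial\theta_j/\partial x_i$, so the definition of $\cup$ gives $-F\cup a=\sum_{i,j}F_{ij}\,d\xi_j\wedge(dx_i\rfloor a)=\sum_{k,l}\frac{\partial\theta_k}{\partial x_l}\,d\xi_k\wedge(dx_l\rfloor a)$ after relabelling. Comparing with the displayed surviving term, the two agree precisely when $\partial\theta_l/\partial x_k=\partial\theta_k/\partial x_l$, i.e. when $F$ is a \emph{symmetric} $(1,1)$-form. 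This is the one point I expect to be the real obstacle, or rather the hypothesis one must keep in view: the identity requires $F=d\theta^{\#}$ to be symmetric, equivalently $d\theta=0$. That holds in every intended application, where $\theta=n_j=d\rho_j$ is closed and $F_j=d\dhash\rho_j$ is the Hessian of $\rho_j$, hence symmetric. As a consistency check one recovers the stated special case: for $a$ of bidegree $(1,0)$ the formula becomes $[\dhash,\theta\rfloor]a=a\rfloor F=-F\cup a$, using $F\cup a=-a\rfloor F$ as recorded just before the lemma.
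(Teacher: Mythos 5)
Your proof is correct and follows essentially the same route as the paper's: observe that the anticommutator of the two antiderivations is tensorial (kills functions, since $E_0\perp E_1$ makes the first-order parts cancel), then compute the surviving zeroth-order term $\sum_{k,l}\tfrac{\partial\theta_l}{\partial x_k}\,d\xi_k\wedge(dx_l\rfloor a)$ in coordinates. Your further observation is a genuine and correct point that the paper's proof passes over silently: identifying this with $-F\cup a$ requires $\partial\theta_l/\partial x_k=\partial\theta_k/\partial x_l$, i.e.\ $d\theta=0$ (equivalently, $F=d\theta^{\#}$ symmetric), which fails for general $(1,0)$-forms $\theta$ but holds in every application in the paper since there $\theta=n_j=d\rho_j$ is exact and $F_j=d\dhash\rho_j$ is a Hessian.
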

\begin{proof} First we note that the commutator is a scalar operator, so that 
  $$
  [\dhash, \theta\rfloor] fa=f [\dhash, \theta\rfloor]a
  $$
  if $f$ is a function. Therefore we may assume that $a=dx_I\wedge d\xi_J$.
  Then $\dhash a=0$ and
  $$
  \dhash\theta\rfloor a=\sum\frac{\partial\theta_i}{\partial x_j}d\xi_j\wedge dx_i\rfloor a,
  $$
  which proves the formula. 
  
\end{proof}
The lemma gives that
$$
(D^{\#})^2 a= -\sum F_j\wedge F_j\cup a,
$$
for $a$ of any bidegree. Combined with the remark immediately before the lemma, this gives that
$$
(D^{\#})^2 a=a\rfloor (1/2)\sum F_j\wedge F_j
$$
if $a$ is $(1,0)$, which concludes the proof of the proposition.

From Proposition 8.1 we see that  integrals like
$$
\int_{[M]_s} \chi (\sum F_j\wedge F_j)^q\wedge \beta^{m-2q},
$$
where $\chi$ is a function on $M$ are intrinsic, i. e. they do not depend on the embedding of $M$ in $\R^n$, since the Riemann curvature (and the metric) are intrinsic. (This is of course not true for similar integrals containing arbitrary combinations of $F_j$.)   In the next section we shall give an illustration of this.
\section{Weyl's tube formula.}

We will now use the formalism of the previuos section  to give a quick proof of Weyl's tube formula (\cite{2Weyl}, \cite{Gray}). The proof is not really different from the original proof, but the formalism helps to organise the formulas.

Let $M$ be a compact submanifold of $\R^n$.  We will consider $T_r(M)$, the tube around $M$ of width $r$, which when $M$ is without boundary is defined as 
$$
T_r(M)=\{x\in\R^n; d(x,M)<r\}.
$$
When $r$ is sufficiently small, $T_r(M)$ is by the tubular neighbourhood theorem diffeomorphic to a neighbourhood of the zero section of the normal bundle $N(M)$ of $M$, 
$$
\Delta(M,r):=\{v\in N(M); |v|<r\}.
$$
A point $v$ in the normal bundle is a vector, normal to to $T_p(M)$ where $p=\pi(v)$, $\pi$ being the projection from $N(M)$ to $M$, and the diffeomorphism is
$$
G: \Delta(M,r) \to \R^n, \quad G(v)= \pi(v)+ v.
$$
When $M$ is a compact manifold {\it with} boundary, we {\it define} $T_r(M)$ to be the image of $\Delta(M,r)$ under this map. Weyl's tube formula is the following theorem.

 \begin{thm} Let $M$ be a compact  $m$-dimensional submanifold of $\R^n$ and let $T_r(M)$ be the tube of width $r$ around $M$. Then, for $r$ small
    $$
    |T_r(M)|= \sum_{2q\leq m} c_{2q} r^{2q+p}\int_{[M]_s} R^q\wedge \beta^{m-2q}
    $$
    for some constants $c_{2q}$ that can be explicitly calculated, where  $p=n-m$ is the codimension of $M$.
  \end{thm}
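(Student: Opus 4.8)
The plan is to compute $|T_r(M)|$ by the change of variables furnished by the tubular neighbourhood diffeomorphism $G(p,v)=p+v$, and then to recognise the resulting fibre integrals over the normal ball as powers of the intrinsic curvature superform, using the Gau\ss\ formula $\sum_j F_j\wedge F_j=2R$ established in the previous section. The whole content of Weyl's theorem is that the embedding-dependent quantities (the second fundamental forms $F_j$) enter only through the intrinsic combination $R$, and the superformalism is meant to make this collapse transparent.

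First I would pull back Lebesgue measure under $G$. Parametrising $\Delta(M,r)$ by a point $p\in M$ and a normal vector $v=\sum_{j=1}^p t_j\vec n_j(p)$, and working in an orthonormal tangent frame, the columns $\partial G/\partial t_k=\vec n_k$ fill the normal block of the differential with the identity, so the matrix of $dG$ is block triangular and its determinant reduces to the tangential Jacobian $\det\!\big(I-\sum_j t_jS_j\big)$, where $S_j$ is the matrix of the second fundamental form in the direction $\vec n_j$; in particular the normal-connection terms drop out automatically. Since the restriction of $F_j=dn_j^{\#}$ to $M_s$ is exactly this second fundamental form, and since wedging with $[M]_s$ kills all normal components, the determinant identity for $(1,1)$-superforms (the analogue of $A^m/m!=\det(A)\,\beta^m/m!$) gives
$$
[M]_s\wedge\Big(\beta-\sum_j t_jF_j\Big)^m/m!=\det\!\Big(I-\sum_j t_jS_j\Big)\,[M]_s\wedge\beta^m/m!.
$$
Combining this with $dS_M=[M]_s\wedge\beta^m/m!$ and Fubini turns the tube volume into
$$
|T_r(M)|=\int_{|t|<r}\Big(\int_{[M]_s}\big(\beta-\textstyle\sum_j t_jF_j\big)^m/m!\Big)\,dt .
$$

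Next I would expand the inner integrand by the multinomial theorem (the even-degree forms $\beta$ and $F_j$ all commute) and integrate over the normal ball $\{|t|<r\}\subset\R^p$. Every monomial odd in some $t_j$ integrates to zero, so only even total degrees $2q$ in the $F_j$ survive, each carrying a factor $r^{2q+p}$ by the rescaling $t\mapsto rt$. The decisive point is the identification of the surviving coefficient: the tensor $T_{j_1\cdots j_{2q}}=\int_{|t|<r}t_{j_1}\cdots t_{j_{2q}}\,dt$ is totally symmetric and $O(p)$-invariant, hence—by the one-dimensionality of isotropic symmetric tensors—a multiple of the symmetrisation of $\delta^{\otimes q}$; contracting it against the commuting product $F_{j_1}\wedge\cdots\wedge F_{j_{2q}}$ collapses each pairing to $\sum_j F_j\wedge F_j$, so that
$$
\int_{|t|<r}\Big(\sum_j t_jF_j\Big)^{2q}\,dt=c'_{2q}\,r^{2q+p}\Big(\sum_j F_j\wedge F_j\Big)^q .
$$
By the Gau\ss\ formula $\sum_j F_j\wedge F_j=2R$ this is an intrinsic multiple of $R^q$, and collecting the binomial and factorial factors into a single constant yields precisely $\sum_{2q\le m}c_{2q}\,r^{2q+p}\int_{[M]_s}R^q\wedge\beta^{m-2q}$, with $2q\le m$ forced by the surviving power $\beta^{m-2q}$.

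The main obstacle is this middle step: making rigorous that the normal-ball integration produces the fully isotropic combination $\big(\sum_j F_j\wedge F_j\big)^q$ and nothing else. Here the commutativity of the $(1,1)$-forms $F_j$ is essential, since it guarantees that the fibre integral sees only the totally symmetric part of $T_{j_1\cdots j_{2q}}$, which $O(p)$-invariance pins down as a multiple of $|t|^{2q}$, i.e.\ of $(\sum_j F_j\wedge F_j)^q$. Everything else is routine: verifying the determinant identity $[M]_s\wedge A^m/m!=\det(A|_{M_s})\,[M]_s\wedge\beta^m/m!$ for $(1,1)$-superforms, and evaluating the explicit constants $c_{2q}$ from the radial moments $\int_{|t|<r}|t|^{2q}\,dt$.
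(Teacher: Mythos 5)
Your proposal is correct and follows essentially the same route as the paper: pull back under the tubular neighbourhood map, expand in powers of $\sum t_jF_j$, and use the rotational invariance of the normal ball (the paper phrases your isotropic-tensor argument as the polynomial identity $\int_{|t|<r}(\sum t_ja_j)^{2q}dt=c\,|a|^{2q}r^{2q+p}$, extended to the commutative algebra generated by the $F_j$). The only cosmetic difference is that the paper pulls back the superform $\beta^n/n!$ directly rather than passing through the classical Jacobian $\det(I-\sum t_jS_j)$ and your determinant identity.
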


Thus the theorem says first that the volume of the tubes of width $r$ is a polynomial in $r$ for $r$ small, and moreover and most remarkably that the coefficients of the polynomial are intrinsic. Thus, if we have two isometric embeddings of the Riemannian manifold $M$ into $\R^n$, the tube volumes are the same.

For the proof, we have first
 $$
  |T_r(M)|=\int_{[T_r(M)]_s} (\sum dx_k\wedge d\xi_k)^n/n!.
 $$
 It is enough to prove the theorem locally; i. e. we may assume that the normal forms $n_j=d\rho_j$ of section 4 are defined on all of $M$. Then the normal bundle is trivial and isometric to $M\times B_r(0)$, where $B_r(0)$ is the ball in $\R^n$ of radius $r$ and center 0, via the orthonormal frame $\vec{n_j}$. Hence the diffeomorphism $G$ described above can be written
 $$
  G:M\times B_r(0) \to T_r(M),
  $$
  with
  $$
  G(y,t)= y+\sum t_j\vec{n_j}.
  $$
  Pulling back by $G$ as in section 2 we get
  $$
  |T_r(M)|=\int_{M\times B_r(0)}(\sum dy_k\wedge d\xi_k +\sum t_j d n_j^{\#}+\sum dt_j\wedge n_j^{\#})^n/n!=
    $$
    $$
   = \int_{M\times B_r(0)}(\sum dy_k\wedge d\xi_k +\sum t_j F_j+\sum dt_j\wedge n_j^{\#})^n/n!
    $$
    Although $M\times B(0,r)$ is not strictly speaking a domain in $\R^n$, the integral here should be interpreted as an ordinary integral with respect to $(y,t)$ and the Berezin integral with respect to $\xi$.
      We expand the integrand by the trinomial theorem ( since all three forms within the parenthesis are of degree 2 they commute), and get
    
 \be
  |T_r(M)|=\sum_l c_{l,m}\int_{|t|<r} dt\int_{[M]_s} \beta^l\wedge (\sum t_j F_j)^{m-l}.
  \ee   
  Here $\beta=\sum dy_k\wedge d\xi_k$, and we are using 
  $$
  [M]\wedge(\sum dt_j\wedge n_j^{\#})^{n-m}= c[M]_s dt_1..\wedge dt_k
  $$
  for some constant $c$. What remains is therefore to compute the form valued integral
  $$
  \int_{|t|<r} (\sum t_jF_j)^{m-l}dt.
  $$
  We first compute the integral
  \be
   \int_{|t|<r}(\sum t_ja_j)^{m-l} dt,
   \ee
  where $a=(a_1, ...a_p)$ lies in $\R^p$. It clearly vanishes when $m-l$ is odd, and when $m-l=2q$ it equals a constant times 
  $$
  |a|^{2q} r^{2q+p}.
  $$
  This follows since the integral is rotational invariant and homogenous of degree $2q$ in $a$, and homogenous of order $2q+p$ in $r$.
  Hence, up to a constant,  the integral in (10.2) equals
  \be
  (\sum a_j^2)^qr^{2q+p}
  \ee
From a different point view, we could have expanded the integrand in (10.2) and obtained a linear combination of monomials in $a$  of degree $m-l$. The fact that the resulting homogenous polynomial in $a$ is given by (10.3) is therefore an algebraic identity. 
  This must also hold when $a_j$ are not real numbers but lie in any commutative algebra over the reals, like in our case when $a_j=F_j$ are two-forms. 
  Hence
  $$
   \int_{|t|<r}(\sum t_jF_j)^{m-l} dt= c_q r^{2q+p}(\sum F_j^2)^q=c_q'r^{2q+p} R^q.
   $$
   Inserting this into (10.1), the theorem follows.


\begin{thebibliography}{9999}
\bibitem{Alexander-Osserman}{Alexander, H. and Osserman, R.} \textit{
Area bounds for various classes of surfaces.}{  Amer. J Math., 97 (1975), pp. 753-769.} 
\bibitem{Babaee}{Babaee, F.}\textit{ Complex tropical currents, extremality and approximations.}{ arXiv: 1403.7456}

\bibitem{Bedford-Taylor}{ Bedford, E. and Taylor, B.A.}\textit{ The Dirichlet problem for a complex Monge-Amp\`ere equation}{ Invent. Math. 37 (1976), pp. 1-44}
\bibitem{2Bedford-Taylor}{ Bedford, E. and Taylor, B.A.}\textit{ A new capacity for plurisubharmonic functions.}{ Acta Math. 149 (1982), pp. 1-40}

\bibitem{Brendle-Hung}{ Brendle, S. and Hung, P-K.}\textit{ Area bounds for minimal surfaces that pass through a prescribed point in the ball.}{ Geom. Func. Anal. 27 (2017) pp. 235-239}  

\bibitem{Berezin}{ Berezin, F.}\textit{ Introduction to superanalysis.}{ Mathematical Physics and Applied Mathematics, 9, D Reidel Publishing Company, Dordrecht, 1987}

  \bibitem{Berndtsson}{Berndtsson, B.}\textit{ Subharmonicity properties of the Bergman kernel and some other functions associated to pseudoconvex domains.}{ Ann. Inst. Fourier 56 (2006), pp. 1663-1662}
  \bibitem{B-Sibony}{Berndtsson B. and Sibony, N.}\textit{ The $\dbar$-equation on a positive current.}{ Invent. Math. 147 (2002, pp. 371-428}

\bibitem{Colding-Min}{ Colding, T.H.  and Minicozzi, W.P. II}\textit{ Minimal Surfaces,}{ Courant Lecture Notes in Mathematics 4 (1999)}

\bibitem{2Colding-Min}{ Colding, T.H. , Minicozzi, W.P. II and Pedersen, E. K.}\textit{ Mean curvature flow.}{ Bull. Amer. Math. Soc. 52, (2015), pp. 297-333}

\bibitem{El Mir}{ El Mir, H.}\textit{ Sur le prolongement des courants positifs ferm\'es,}{ Acta Math. 153 (1984), pp. 1-45}

  \bibitem{Gray}{ Gray, A.}\textit{ Tubes,}{ Addison-Wesley, 1990}

\bibitem{Landkof}{Landkof, N.}\textit{ Foundations of modern potential theory,}{ Grundlehren der mathematischen Wissenschaften, Springer,  1972}

\bibitem{Lagerberg}{Lagerberg, A.}\textit{ Super currents and tropical geometry,}{ Math. Z. 270 (2012), pp. 125-156}

 \bibitem{Lelong}{Lelong, P.}\textit{ Fonctions plurisousharmoniques et formes differentielles positives,}{ Gordon and Breach 1968} 

\bibitem{Mikhalkin}{Mikhalkin, G.}\textit{ Tropical geometery and its applications,}{ Proceedings of the ICM, Modrid 2006}




\bibitem{Skoda}{Skoda, H.}\textit{ Prolongement des courants, positifs, ferm\'es des masse finie.}{ Invent. Math. 66 (1982), pp. 361-376}
    
\bibitem{Smoczyk}{Smoczyk, K.}\textit{ Mean curvature flow in higher codimension: introduction and survey,}{ Calc. Var. Partial Differential Equations 8, (1999), pp. 231-274}

  \bibitem{Warner}{ Warner, F.}\textit{ Foundations of differentiable manifolds and Lie groups.}{ G. T. M. Springer 1983}
\bibitem{Weyl}{Weyl, H.}\textit{ On the volume of tubes,}{ Amer. J. Math. 61 (1939), pp. 461-472}
    
\bibitem{2Weyl}{Weyl, H.}\textit{ Zur infinitesmalgeometrie $p$-dimensionale fl\"ache im $n$-dimensionalen Raum,}{ Math. Z. 12 (1922), pp. 154-160}
\end{thebibliography}
\end{document}